\renewcommand\footnotemark{}
\numberwithin{equation}{section} 
\theoremstyle{plain}
\newtheorem*{theo}{Theorem}
\newtheorem{theorem}{Theorem}
\newtheorem{lemma}[theorem]{Lemma}
\newtheorem{corollary}[theorem]{Corollary}
\newtheorem{proposition}[theorem]{Proposition}
\theoremstyle{definition}
\newtheorem*{remark}{Remark}
\newtheorem*{notremark}{Remark on notation}
\newtheorem{question}{Question}
\newcommand{\HH}{\mathbb H}
\newcommand{\RR}{\mathbb R}
\newcommand{\ZZ}{\mathbb Z}
\newcommand{\CC}{\mathbb C}
\newcommand{\calL}{\mathcal L}
\newcommand{\calG}{\mathcal G}
\newcommand{\calS}{\mathcal S}
\newcommand{\calT}{\mathcal T}
\newcommand{\co}{\colon\thinspace}
\newcommand{\PSL}{\mathrm{PSL}}
\newcommand{\Mod}{\mathrm{Mod}}
\newcommand{\PMod}{\mathrm{PMod}}
\newcommand{\PB}{\mathrm{P}\mathcal{B}}
\newcommand{\Teich}{\mathcal{T}}
\newcommand{\Conf}{\mathrm{Conf}}
\newcommand{\colvec}[2][.7]{%
  \scalebox{#1}{%
    \renewcommand{\arraystretch}{.8}%
    $\begin{bmatrix}#2\end{bmatrix}$%
  }
}
\newcommand{\LB}{[ \hspace{-1.5pt} [ }
\newcommand{\RB}{] \hspace{-1.5pt} ] }
\begin{document}

\newcounter{ccomments}
\newcommand{\Chris}[1]{\textbf{\color{red}(C\arabic{ccomments})} \marginpar{\scriptsize\raggedright\textbf{\color{red}(C\arabic{ccomments})Chris: }#1}
	\addtocounter{ccomments}{1}}
	
\newcounter{acomments}
\newcommand{\autumn}[1]{\textbf{\color{red}(A\arabic{acomments})} \marginpar{\scriptsize\raggedright\textbf{\color{red}(A\arabic{acomments})Autumn: }#1}
	\addtocounter{acomments}{1}}

\title{\textbf{Atoroidal surface bundles}}
\author{Autumn E. Kent and Christopher J. Leininger\thanks{A. E. Kent was supported in part by the NSF grants DMS-1104871, DMS-1350075, DMS-1904130, and DMS-2202718. 
C. J. Leininger was supported in part by the NSF grants DMS-2305286, DMS-2106419, DMS-1811518, DMS-1510034, DMS-1207183, DMS-0603881, and DMS-0905748.}}

\maketitle

\begin{abstract}
We show that there is a type--preserving homomorphism from the fundamental group of the figure--eight knot complement to the mapping class group of the thrice--punctured torus. As a corollary, we obtain infinitely many commensurability classes of purely pseudo-Anosov surface subgroups of mapping class groups of closed surfaces. This gives the first examples of compact aspherical atoroidal surface bundles over surfaces.
\end{abstract}


\section{Introduction}

Mapping class groups of surfaces are often viewed in analogy with Kleinian groups, with the Teichm\"uller space playing the role of $\HH^3$ and the mapping class group playing the role of a finite covolume lattice in $\PSL_2(\CC)$. 
Finite covolume Kleinian groups contain swarms of quasifuchsian closed surface subgroups. 
In the noncocompact case, the existence of such subgroups is due to J. Masters and X. Zhang \cite{MastersZhang2008} and M. Baker and D. Cooper \cite{Baker_Cooper_2000}, while their ubiquity was proved by Cooper and D. Futer \cite{CooperFuter} and J. Kahn and A. Wright \cite{KahnWright2021}.  In the cocompact case, existence and ubiquity is due to Kahn and V. Markovi\'c \cite{KahnMarkovic2012}.
Such quasifuchsian subgroups contain no parabolic elements, and so give examples of purely hyperbolic surface subgroups in every finite covolume Kleinian group.

At the turn of the century, B. Farb and L. Mosher \cite{FarbMosher} developed a theory of convex cocompact subgroups of mapping class groups that has been refined and explored over the years---see 
\cite{FarbMosher2}, 
\cite{KentLeininger2008}, 
\cite{Hamenstaedt},
\cite{KentLeininger2008.convergence}, 
\cite{KentLeiningerSchleimer2009},
\cite{KentLeininger2007.survey},
\cite{KoberdaMangahasTaylor},
\cite{DowdallKentLeininger2014},
\cite{DurhamTaylor2015},
\cite{Fujiwara2015},
\cite{MangahasTaylor2016},
\cite{BestvinaBrombergKentLeininger2020}, 
\cite{LeiningerRussell}, and
\cite{Tshishiku2024}.

There are many equivalent definitions of convex cocompactness, see \cite{KentLeininger2008}, \cite{KentLeininger2008.convergence}, and \cite{BestvinaBrombergKentLeininger2020} for a few, but the simplest to state is that sending the subgroup of $\Mod(S)$ to its orbit in the curve complex of $S$ is a quasiisometric embedding. This was proven equivalent to convex cocompactness by the authors \cite{KentLeininger2008} and, independently, by U. Hamenst\"adt \cite{Hamenstaedt}.

In \cite{FarbMosher}, Farb and Mosher constructed convex cocompact free groups and asked whether or not there are examples of convex cocompact subgroups of $\Mod(S)$ that are not virtually free. 
\begin{question}[Questions 1.7 and 1.9 of \cite{FarbMosher}]\label{question.big}
	Are there convex cocompact subgroups of $\Mod(S)$ that are not virtually free? Are there infinite convex cocompact subgroups that are isomorphic to the fundamental group of a closed surface? Are the surface group extensions of such subgroups $\delta$--hyperbolic?
\end{question}

Any subgroup $G$ of $\Mod(S)$ gives rise to a surface group extension
\[
	1 \to \pi_1(S) \to \Gamma_G \to G \to 1.
\]
This may be constructed as follows.  Fix a basepoint $z$ in $S$, and let $S_z$ denote the surface obtained by puncturing $S$ at $z$.  Then $\Gamma_G$ is obtained by pulling $G$ back to $\Mod(S_x)$ in the Birman exact sequence \cite{Birman}
\[
	1 \to \pi_1(S,x) \to \Mod(S_x) \to \Mod(S) \to 1.
\]

Combining the work of Farb, Mosher, and Hamenst\"adt \cite{FarbMosher,Hamenstaedt} produces the following theorem characterizing the hyperbolic extensions.  See also \cite{MjSardar}.
\begin{theo}[Farb--Mosher \cite{FarbMosher}, Hamenst\"adt \cite{Hamenstaedt}]
	Suppose that $S$ is closed. Then the extension $\Gamma_G$ is $\delta$--hyperbolic if and only if $G$ is a convex cocompact subgroup of $\Mod(S)$. \qed
\end{theo}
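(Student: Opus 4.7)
\emph{Setup.} The plan is to prove the two implications separately, using the characterization quoted in the introduction: $G \leq \Mod(S)$ is convex cocompact if and only if the orbit map $G \to \calC(S)$ is a quasi-isometric embedding.

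\emph{Convex cocompact $\Rightarrow$ hyperbolic extension.} I would follow Farb--Mosher and build a geometric model for $\Gamma_G$. Let $H \subset \Teich(S)$ be the weak convex hull of the limit set of $G$ in $\PMF$. Convex cocompactness gives that $G$ acts cocompactly on $H$, and that $H$ is Gromov hyperbolic and $G$-equivariantly quasi-isometric to $G$. The canonical bundle over $\Teich(S)$ whose fibers carry the marked hyperbolic structures on $S$, pulled back to $H$ and lifted to universal covers, produces a total space $E$ whose fibers are isometric to $\HH^2$ and on which $\Gamma_G$ acts properly and cocompactly. A combination theorem---Bestvina--Feighn's hyperbolic flaring or Mj--Sardar's metric bundle criterion---then gives hyperbolicity of $E$, hence of $\Gamma_G$. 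The required flaring reduces to uniform contraction of Teichm\"uller geodesics in $H$, which is exactly what convex cocompactness supplies.

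\emph{Hyperbolic extension $\Rightarrow$ convex cocompact.} I would follow Hamenst\"adt. Assume $\Gamma_G$ is $\delta$-hyperbolic. First, after passing to a torsion-free finite index subgroup, I would show every non-trivial element of $G$ is pseudo-Anosov: if $g \in G$ fixed the isotopy class of a simple closed curve $\alpha$ (with orientation, after squaring), a representative diffeomorphism fixing $\alpha$ would give a lift $\tilde g \in \Mod(\mathring S)$ commuting with the point-push $[\alpha] \in \pi_1(S)$, producing a $\ZZ^2 \leq \Gamma_G$ and contradicting $\delta$-hyperbolicity. Next, I would prove that the orbit map $G \to \calC(S)$ is a quasi-isometric embedding, which by the criterion above is convex cocompactness. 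The plan: lift a Cayley geodesic in $G$ to a quasi-geodesic $\tilde\gamma$ in $\Gamma_G$, project it to a path in $\calC(S)$---well-defined because $\pi_1(S) \subset \Gamma_G$ acts trivially on $\calC(S)$---and use $\delta$-hyperbolicity of $\Gamma_G$ together with the bounded image of $\pi_1(S)$-cosets in $\calC(S)$ to force the resulting path to be undistorted.

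\emph{Main obstacle.} The forward direction is largely mechanical once the bundle model and the combination theorem are in place. The substantive difficulty lies in the reverse direction: converting abstract hyperbolicity of $\Gamma_G$ into a uniform linear lower bound on the rate at which the $G$-orbit spreads in $\calC(S)$. One must rule out the scenario in which long words in $G$ correspond to Teichm\"uller rays that wander deep into thin parts or shuffle between low-complexity subsurfaces, accumulating large subsurface projections while making little progress in $\calC(S)$. Hamenst\"adt addresses this via train-track neighborhoods and the dynamics of the Teichm\"uller flow; a modern alternative would deploy hierarchical hyperbolicity and subsurface projections. Either way, this quantitative undistortion estimate is the technical heart of the argument.
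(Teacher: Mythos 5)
The paper does not prove this theorem: it ends with \textup{\qedsymbol} immediately after the statement and the surrounding text reads ``Combining the work of Farb, Mosher, and Hamenst\"adt produces the following theorem,'' i.e.~it is a citation of \cite{FarbMosher} and \cite{Hamenstaedt}, not a proof. So there is no paper argument to compare against; the only useful thing to do is to check whether your outline faithfully reflects what those references actually do.

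On that score, your decomposition is essentially the right roadmap. The forward implication is Farb--Mosher: weak hull of the limit set in $\Teich(S)$, cocompact $G$--action, pull back the Bers/canonical bundle to obtain a metric $\HH^2$--bundle over a $\delta$--hyperbolic base, and apply a flaring/combination criterion (Bestvina--Feighn in \cite{FarbMosher}; Mj--Sardar works as a modern substitute). The reverse implication in full generality is Hamenst\"adt \cite{Hamenstaedt}, and its technical heart is exactly what you identify: promoting $\delta$--hyperbolicity of $\Gamma_G$ to coarse undistortion of the $G$--orbit in $\calC(S)$. Two small points worth tightening. First, your ``purely pseudo-Anosov'' step is cleaner stated via $3$--manifolds than via commuting lifts: for $g\in G$, the preimage of $\langle g\rangle$ in $\Gamma_G$ is $\pi_1(S)\rtimes_{\tilde g}\ZZ$, the fundamental group of the mapping torus of $g$, and by Thurston this is atoroidal (hence has no $\ZZ^2$) precisely when $g$ is pseudo-Anosov; the ``lift commutes with the point-push'' version requires you to first power up so $g_*$ genuinely fixes $\alpha\in\pi_1(S)$ (not merely its conjugacy class), a choice you glossed over. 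Second, ``project a quasi-geodesic in $\Gamma_G$ to a path in $\calC(S)$'' is not a single step---it presupposes the Masur--Minsky machinery relating $\Mod(S)$ to $\calC(S)$ and is where all the work lives; your ``main obstacle'' paragraph acknowledges this, which is the honest thing to say, but as written the reverse implication is a plan rather than an argument. None of this is a gap in the sense of a wrong turn: it is the standard route, and it is what the cited papers execute.
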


When $G$ is a cyclic subgroup generated by a pseudo-Anosov mapping class $g$, the extension $\Gamma_G$ is the fundamental group of a hyperbolic $3$--manifold, by Thurston's Hyperbolization theorem for fibered $3$--manifolds \cite{ThurstonFibered,OtalFibered}.
It is then natural to wonder if there are closed surface bundles over surfaces that admit a hyperbolic structure as well.

\begin{question}\label{question.hyperbolic.bundle} Is there a compact surface bundle over a surface that admits a hyperbolic metric? 
\end{question}

Conjecturally, the answer to the second question is no, as it would violate certain conjectures about the Seiberg--Witten invariants of hyperbolic manifolds---see \cite{Reid}.  However, a negative answer would not forbid surface bundles over surfaces from having $\delta$--hyperbolic fundamental groups, and so we focus our attention on Question \ref{question.big}.\footnote{It \textit{is} known that such a bundle cannot be a complex hyperbolic surface \cite{Kapovich1998}.}
Note that convex cocompactness implies that the subgroup is virtually purely pseudo-Anosov \cite{FarbMosher}, and so, implicit in Question \ref{question.big} is the following question.  

\begin{question}\label{question.PPA}
	Is there a purely pseudo-Anosov closed surface group in some mapping class group $\Mod(S)$?
\end{question}

We give the first examples of such groups.

\newcommand{\TheoremInfiniteSurface}{
For any closed surface $S$ of genus $g\geq 4$, there are infinitely many commensurability classes of purely pseudo-Anosov surface subgroups of $\Mod(S)$.
}
\begin{theorem}[Purely pseudo-Anosov surface groups]\label{theorem.ppAsurfaces} \TheoremInfiniteSurface
\end{theorem}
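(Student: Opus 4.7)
The plan is to combine the paper's main theorem---the type--preserving homomorphism $\rho \colon \pi_1(M) \to \Mod(S_{1,3})$ from the fundamental group of the figure--eight knot complement $M$---with the abundance of closed quasifuchsian surface subgroups in $\pi_1(M)$ supplied by the theory of surface subgroups of cusped hyperbolic $3$--manifolds, and then to transfer the resulting subgroups into $\Mod(S_{g,0})$ for every $g \geq 4$ by a branched cover construction.

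Since $M$ is one--cusped and hyperbolic, the works of Masters--Zhang, Baker--Cooper, Cooper--Futer, and Kahn--Wright cited in the introduction provide infinitely many commensurability classes of closed quasifuchsian surface subgroups $H_1, H_2, \ldots \leq \pi_1(M)$, each purely loxodromic. The type--preserving property of $\rho$ sends the peripheral $\ZZ^2$ of $\pi_1(M)$ to multi--twists on the cusp curves of $S_{1,3}$; combined with a Nielsen--Thurston analysis, this forces every loxodromic element of $\pi_1(M)$ to map to a pseudo--Anosov mapping class, since a reducible image would by type--preservation force a conjugate into the peripheral subgroup. Hence each $\rho(H_n)$ is purely pseudo--Anosov. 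To ensure injectivity and to preserve distinct commensurability classes, I would pass to a finite--index $\Gamma \leq \pi_1(M)$ on which $\rho$ is faithful (either argued directly from the construction of $\rho$ or obtained by a residual finiteness argument) and replace each $H_n$ by $H_n \cap \Gamma$, still a closed surface subgroup in the same commensurability class.

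To realize $\Mod(S_{g,0})$ for every $g \geq 4$, I would use branched covers $S_{g,0} \to \overline{S_{1,3}}$ ramified over the three filled cusps. A finite--index subgroup of $\PMod(S_{1,3})$ lifts to $\Mod(S_{g,0})$ modulo the finite deck group, and the lift of a pseudo--Anosov is pseudo--Anosov; so purely pseudo--Anosov surface subgroups transport to purely pseudo--Anosov surface subgroups of $\Mod(S_{g,0})$. The minimal case $g = 4$ is a triple cyclic cover with full ramification at each marked point, giving $2g-2 = 3(3-1) = 6$, and varying the branching data together with composition with unbranched finite covers fills in all remaining $g \geq 4$.

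The hardest part will be the faithfulness and commensurator analysis: showing that $\rho$ is virtually faithful and that the commensurator of $\rho(\pi_1(M))$ inside $\Mod(S_{1,3})$ is sufficiently controlled, so that distinct commensurability classes of the $H_n$ in $\pi_1(M)$ descend to distinct commensurability classes of images in $\Mod(S_{g,0})$ (commensurability is not preserved under arbitrary homomorphisms, nor under inclusion into a larger overgroup, so this step is not automatic). A secondary but nontrivial point is verifying that every $g \geq 4$ is realized, since cyclic branched covers of $\overline{S_{1,3}}$ ramified only at the three marked points are constrained by the homological relation among the peripheral classes, which may require nonabelian covers or a combination with unbranched covers to cover all genera.
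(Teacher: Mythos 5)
Your high-level strategy---compose the type-preserving representation of the figure-eight knot group with closed surface subgroups of that group, then transport to $\Mod(S_{g,0})$ via branched covers of $T^2$ branched over the three marked points---is indeed the route the paper takes. However, the two steps you correctly single out as ``the hardest part'' are genuine gaps, and the resolutions you gesture at would not close them.

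On commensurability preservation: you are right that this is not automatic, but the backup you suggest for faithfulness (a ``residual finiteness argument'') cannot work. The kernel of any representation of $\pi_1(M_8)$, or of a finite-index subgroup $\Gamma$, is a normal subgroup; since these groups are torsion-free, a nontrivial kernel is infinite and meets every finite-index subgroup, so one cannot restore injectivity by passing to finite index. Fortunately the paper's $\Delta_f$ is faithful by construction---it is an embedding of short exact sequences (Corollary~\ref{C:extended representation})---so the issue you flag dissolves; but the genuinely hard point is the converse: if $\Delta_f(G_1)$ and $\Delta_f(G_2)$ are conjugate by some $g \in \PMod(T^2_X)$, why must $G_1$ and $G_2$ be commensurable in $\Gamma$? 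The paper's Corollary~\ref{corollary.surfaces} resolves this by forgetting $\{z,w\}$ to force $\rho_{\{z,w\}}(g)$ into the normalizer of $\langle F_{\mathbf 0}\rangle$, which is finite over $\langle F_{\mathbf 0}\rangle$ since $F_{\mathbf 0}$ is pseudo-Anosov, and then applying the retraction of Lemma~\ref{L:good maps and retractions} to push $g$ back into $\Delta_f(\Gamma)$ itself. Some such control of the commensurator of $\Delta_f(\Gamma)$ in $\PMod(T^2_X)$ is indispensable, and nothing in your sketch supplies it.

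On realizing every genus $g\geq 4$: the cyclic covers you propose do not suffice. A degree-$d$ cyclic cover of $T^2$ fully ramified over all three points exists only for odd $d$ (the homological constraint you anticipate) and has genus $(3d-1)/2$, so yields only $g\equiv 1\pmod 3$; composing with an unbranched degree-$k$ cover sends $g_0\mapsto k(g_0-1)+1$ and preserves this congruence. Allowing partial ramification does not save the day either: a short Riemann--Hurwitz check shows no cyclic cover of $T^2$ branched over three points, with local degree $>1$ everywhere over the branch locus, realizes $g=6$, and unbranched postcomposition cannot reach $6$ from the attainable base genera. The paper's Proposition~\ref{P:all at least 4} therefore uses a genuinely nonabelian construction: a cut-open building block $\Sigma$ glued in copies along arcs subject to combinatorial rules, yielding all even genera $\geq 4$ from chains of genus-$2$ pieces, genus $5$ from a specific four-sheeted cover, and the remaining odd genera by splicing.
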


We obtain Theorem \ref{theorem.ppAsurfaces} by constructing a nice representation of the figure--eight knot group into the mapping class group of a thrice--punctured torus and using a branched covering trick.

Let $M_8$ be the complement of the figure--eight knot.
This is a finite--volume arithmetic  hyperbolic $3$--manifold whose associated Kleinian group contains infinitely many commensurability classes of cocompact fuchsian subgroups---see \cite{Reid.1991,RIley.1975a,Riley.1975b,Maclachlan}.

\begin{theorem}[Type preserving figure eight] \label{T:type preserving fig 8} There is a type--preserving representation of the fundamental group $\pi_1(M_8)$ of the figure--eight knot complement to the mapping class group of the thrice--punctured torus.
\end{theorem}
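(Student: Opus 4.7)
My strategy exploits the fibered structure of the figure-eight knot complement. Recall that $M_8$ fibers over $S^1$ with fiber the once-punctured torus $S_{1,1}$ and pseudo-Anosov monodromy $\phi$ represented by the cat matrix $A = \bigl(\begin{smallmatrix} 2 & 1 \\ 1 & 1\end{smallmatrix}\bigr)$. This gives a semidirect product $\pi_1(M_8) \cong F_2 \rtimes_{\phi_*} \ZZ$, with $F_2 = \pi_1(S_{1,1}) = \langle a, b \rangle$, stable letter $t$, and relations $t x t^{-1} = \phi_*(x)$. The plan is to mark two additional $\phi$-invariant punctures on the fiber so that $\phi$ descends to a mapping class of the resulting thrice-punctured torus and the fiber group acts by point-pushing.

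Concretely, since $|\det(A - I)| = 1$ and $|\det(A^2 - I)| = 5$, the map $\phi^2$ has exactly five fixed points on $T^2$: one is the puncture $p_0$ of $S_{1,1}$, and the remaining four split into two $\phi$-orbits of length two. I would fix one such orbit $\{p_1, p_2\}$ and set $S_{1,3} := T^2 \setminus \{p_0, p_1, p_2\}$; then $\phi$ permutes $p_1, p_2$ and descends to a mapping class $\Phi \in \Mod(S_{1,3})$. Point-pushing $p_1$ via the Birman exact sequence embeds $F_2 = \pi_1(S_{1,1}) \hookrightarrow \Mod(S_{1,2})$; since $F_2$ has cohomological dimension one, this inclusion lifts (after a choice) to $F_2 \hookrightarrow \Mod(S_{1,3})$, producing classes $\rho(a), \rho(b)$. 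Setting $\rho(t) = \Phi$, the semidirect product relations should follow from naturality of point-pushing under $\phi$, producing a homomorphism $\rho \colon \pi_1(M_8) \to \Mod(S_{1,3})$.

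The main obstacle is verifying type preservation. The peripheral $\ZZ^2 \le \pi_1(M_8)$ is generated by the longitude $\lambda = [a, b]$ (the boundary of the fiber $S_{1,1}$, wrapping once around $p_0$) and the meridian $\mu = u^{-1} t$, where $u \in F_2$ is chosen so that $\phi_*([a,b]) = u [a,b] u^{-1}$ (ensuring $\mu$ commutes with $\lambda$). Under $\rho$, the image $\rho(\lambda) = [\rho(a), \rho(b)]$ is, by the standard formula for point-pushing a commutator encircling $p_0$, a product of Dehn twists along disjoint curves separating $p_0$ from $\{p_1, p_2\}$ --- hence a multitwist. The image $\rho(\mu) = \rho(u)^{-1} \Phi$ is more delicate: one must show that this product of a point-pushing class with the descended monodromy $\Phi$ is itself a multitwist along the \emph{same} disjoint multicurve as $\rho(\lambda)$. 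The hard part will be this final verification --- in particular, matching the action of $\Phi$ on the puncture-encircling curves with the combinatorics of the chosen period-two orbit --- and it is where the Birman lift must be chosen carefully.
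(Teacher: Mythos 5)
Your plan identifies the right fiber (the once--punctured torus with cat--map monodromy), the right marked set (the puncture $\mathbf 0$ plus a length--two orbit of $\phi$ among the five fixed points of $\phi^2$), and the right target $\Mod(S_{1,3})$, so the framework is close to the paper's.  But there is a genuine gap at the heart of the construction: the step where you pass from the point--push $F_2 \hookrightarrow \Mod(S_{1,2})$ to a lift $F_2 \hookrightarrow \Mod(S_{1,3})$ using only the fact that $F_2$ is free.  Cohomological dimension one guarantees that \emph{some} section of the Birman forgetful map over $F_2$ exists, but almost none of those sections will be type--preserving.  For instance, the ``trivial'' lift that leaves $p_2$ motionless sends every loop $\beta$ to a braid that fixes the puncture curve about $p_2$, hence is reducible.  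The paper's crucial input is a very specific lift, coming from the embedding $x \mapsto (x, f(x))$ of the fiber into the two--point configuration space $\Conf_2(T^2_{\mathbf 0})$ --- that is, one must push $p_1$ along $\beta$ \emph{and simultaneously} push $p_2$ along $f\beta$.  Only with this ``dancing'' lift does the Imayoshi--Ito--Yamamoto generalization of Kra's theorem apply: the image is pseudo-Anosov precisely when $\beta$ and $f\beta$ jointly fill, and the hyperbolicity of the cat matrix guarantees that every non--peripheral $\beta$ in the fiber group does fill with its $f$--image.  An unspecified splitting gives you no such filling criterion, so you cannot prove the fiber group has pseudo-Anosov image.

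Relatedly, the half of type--preservation you flag as the ``main obstacle'' --- showing $\rho(\lambda)$ and $\rho(\mu)$ are multitwists along the same multicurve --- is actually the straightforward direction, and it leaves untouched the harder half: that every \emph{non}--peripheral element maps to a pseudo-Anosov.  For the fiber subgroup this is the filling criterion just discussed; for elements with nonzero exponent in the stable letter, the paper needs an additional argument that is entirely absent from your plan.  Namely, one shows that a reducing curve on $T^2_X$ must bound a disk in $T^2$ containing at least two of the three punctures, reduces by forgetting the third puncture, and then invokes the surprising fact (Lemma~\ref{L:any 1 of 3}) that forgetting \emph{any} one of $\{\mathbf 0, z, w\}$ --- including $\mathbf 0$ itself --- gives an isomorphism of $\pi_1(M_8)$ onto a standard Birman--type figure--eight subgroup of $\Mod(T^2_Y)$, after which reducibility forces the element to be peripheral by a mapping--torus/solid--torus argument.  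Without the dancing lift to make the forgetful maps behave so favorably, and without an argument for the pseudo-Anosov direction, the proposal does not yet constitute a proof.
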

\noindent
Here a \textit{type--preserving} representation $\Delta$ of $\pi_1(M_8)$ into a mapping class group is one for which $\Delta(\gamma)$ is pseudo-Anosov if and only if $\gamma$ is hyperbolic.  Equivalently, $\Delta(\gamma)$ is reducible if and only if $\gamma$ is parabolic (that is, $\gamma$ is peripheral in $M_8$).  See also the remark at the end of Section~\ref{Section.forgetting.punctures}. 

The abundance of purely hyperbolic surface subgroups of $\pi_1(M_8)$ and a \linebreak branched covering trick produces Theorem \ref{theorem.ppAsurfaces}. 

We say that a closed manifold is \textit{atoroidal} if its fundamental group contains no $\ZZ^2$.
Purely pseudo-Anosov surface subgroups of mapping class groups of mapping class groups of closed surfaces give rise to atoroidal surface bundles over surfaces, and we have the following theorem.

\begin{theorem}[Atoroidal surface bundles] \label{T:atoroidal surface bundles}
Let $S$ be a closed surface of genus $g \geq 4$. There are infinitely many closed aspherical atoroidal $4$--manifolds $E$ that fiber as an $S$--bundle over a surface $B$.
\end{theorem}

\noindent
J.-F. Lafont, N. Miller, and L. Ruffoni \cite{LafontMillerRuffoni} have shown that many of the bundles in Theorem~\ref{T:atoroidal surface bundles} have signature $0$, a necessary condition for real hyperbolicity.
On the other hand, the authors have recently shown \cite{KentLeininger2025} that many of these bundles do {\em not} admit a hyperbolic structure.  
Whether {\em any} of the bundles from Theorem \ref{T:atoroidal surface bundles} could provide an affirmative answer to Question~\ref{question.hyperbolic.bundle} remains an open question.

We expect that our surface subgroups from Theorem~\ref{theorem.ppAsurfaces} are convex cocompact, and plan to take up that topic in a subsequent paper.

In light of Theorem \ref{T:type preserving fig 8}, it is natural to ask which Kleinian groups admit type--preserving representations to mapping class groups of surfaces.  

\begin{question} \label{Q:hyperbolic-n} Which hyperbolic $n$--manifold groups admit type--preserving representations into mapping class groups? 
\end{question}

One of the original motivations for the study of convex cocompactness in mapping class groups was to approach the question of M. Gromov as to whether groups $\calG$ with finite $K(\calG,1)$s are hyperbolic if and only if they contain no Baum-slag--Solitar groups.  Indeed, the extension $\Gamma_G$ is hyperbolic if and only if $G$ is convex cocompact, while it has no Baumslag--Solitar subgroups if and only if $G$ is purely pseudo-Anosov. 
G. Italiano, B. Martelli, and M. Migliorini have recently shown \cite{ItalianoMartelliMigliorini}, quite remarkably, that there are in fact finite--type nonhyperbolic groups with no Baumslag--Solitar subgroups, but whether or not there are counterexamples of the form $\Gamma_G$ remains open.  At any rate, we have the following corollary of our work here.

\begin{corollary}
	There is either a $\delta$--hyperbolic surface-by-surface group or a nonhyperbolic surface-by-surface group that has no Baumslag--Solitar subgroups.
\end{corollary}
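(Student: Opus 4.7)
The plan is to derive the corollary directly from Theorem \ref{theorem.ppAsurfaces}, using the dichotomy provided by the Farb--Mosher--Hamenst\"adt theorem stated in the introduction together with the characterization of the absence of Baumslag--Solitar subgroups quoted there.

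First I would invoke Theorem \ref{theorem.ppAsurfaces} to produce a purely pseudo-Anosov closed surface subgroup $G \leq \Mod(S_{g,0})$ (any $g \geq 4$). Pulling $G$ back through the Birman exact sequence
\[
  1 \to \pi_1(S_{g,0}) \to \Mod(\mathring{S}_{g,0}) \to \Mod(S_{g,0}) \to 1
\]
produces a surface group extension
\[
  1 \to \pi_1(S_{g,0}) \to \Gamma_G \to G \to 1,
\]
and since both $\pi_1(S_{g,0})$ and $G$ are closed surface groups, $\Gamma_G$ is a surface-by-surface group in the sense required by the statement.

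Next I would apply the equivalence, quoted in the paragraph on Gromov's question, that $\Gamma_G$ has no Baumslag--Solitar subgroups if and only if $G$ is purely pseudo-Anosov. Since $G$ is purely pseudo-Anosov by construction, $\Gamma_G$ has no Baumslag--Solitar subgroups.

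Finally, I would split on whether $G$ is convex cocompact. If it is, then by the Farb--Mosher--Hamenst\"adt theorem recalled before Question \ref{question.hyperbolic.bundle}, the extension $\Gamma_G$ is $\delta$--hyperbolic, giving the first alternative of the corollary. If $G$ is not convex cocompact, then the same theorem says $\Gamma_G$ is not $\delta$--hyperbolic, and combined with the Baumslag--Solitar conclusion of the previous paragraph this gives the second alternative. Either way one obtains a surface-by-surface group of the desired type, and there is no real obstacle here: the content of the corollary is entirely encoded in Theorem \ref{theorem.ppAsurfaces}, and the argument amounts to reading off the dichotomy from the two previously quoted equivalences.
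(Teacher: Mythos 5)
Your argument is correct and is exactly the reasoning the paper has in mind; the paper's ``proof'' is merely a pointer to \cite{KentLeininger2007.survey}, where the two equivalences (hyperbolicity of $\Gamma_G$ iff $G$ convex cocompact, no Baumslag--Solitar subgroups in $\Gamma_G$ iff $G$ purely pseudo-Anosov) are laid out, and your dichotomy on whether the purely pseudo-Anosov surface group from Theorem~\ref{theorem.ppAsurfaces} is convex cocompact is the intended unpacking of that citation.
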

\begin{proof} See \cite{KentLeininger2007.survey}. 
\end{proof}

In the Kleinian setting, it is a theorem of Thurston \cite[Theorem 5.2.18]{CanaryEpsteinGreen} that, given a finitely generated Kleinian group $\Gamma$ and a number $\chi < 0$, there are only finitely many conjugacy classes of quasifuchsian surface subgroups of $\Gamma$ whose Euler characteristic is at least $\chi$.
B. Bowditch proved \cite{Bowditch} the analogous statement for purely pseudo-Anosov surface subgroups of mapping class groups.

\begin{theorem}[Bowditch \cite{Bowditch}] 
	Given $S$ and a number $\chi < 0$, there are only finitely many conjugacy classes of purely pseudo-Anosov subgroups of $\Mod(S)$ isomorphic to $\pi_1(\Sigma)$, where $\Sigma$ is a closed surface of Euler characteristic at least $\chi$.
\end{theorem}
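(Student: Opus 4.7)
The plan is to exploit the acylindricity of the $\Mod(S)$-action on the curve complex $\calC(S)$, a theorem of Bowditch, together with the observation that a purely pseudo-Anosov subgroup acts on $\calC(S)$ with every nontrivial element loxodromic. Since there are only finitely many topological types of closed $\Sigma$ with $\chi(\Sigma) \geq \chi$, it suffices to fix such a $\Sigma$ and bound the number of conjugacy classes of purely pseudo-Anosov subgroups of $\Mod(S)$ isomorphic to $\pi_1(\Sigma)$.

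Fix a standard surface generating set $\{a_i, b_i\}_{i=1}^g$ of $\pi_1(\Sigma)$. For each injection $\rho \colon \pi_1(\Sigma) \to \Mod(S)$ with purely pseudo-Anosov image, select a point $v_\rho \in \calC(S)$ that approximately minimizes the maximum displacement $\max_i \{ d(v, \rho(a_i) v),\, d(v, \rho(b_i) v) \}$. The central claim is that this minimum displacement is bounded by a constant $D = D(\chi)$, independent of $\rho$. Once this is in place, conjugating $\rho$ by $\Mod(S)$ moves $v_\rho$ into a fixed $\Mod(S)$-orbit representative, leaving each $\rho(a_i),\rho(b_i)$ as an element of $\Mod(S)$ that displaces a pair of fixed vertices by at most $D$. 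Acylindricity (combined with local finiteness of $\calC(S)$ at vertices, or a suitable thickening) then bounds the number of such elements, yielding only finitely many possibilities for $\rho$ up to conjugation.

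The heart of the argument is the displacement bound, and this is where the purely pseudo-Anosov hypothesis does its work. The surface relation $\prod_i [a_i, b_i] = 1$ cannot be satisfied by generators whose loxodromic axes in $\calC(S)$ are too far apart or whose translation lengths are too large: a ping-pong argument, performed using the fine geometry of $\calC(S)$ (subsurface projections, the bounded geodesic image theorem, and the tightness machinery underlying Bowditch's proof of acylindricity), forces the product of commutators to be loxodromic in that regime, contradicting the relation. This parallels the classical Kleinian fact that translation lengths in generating sets of discrete, torsion-free, purely hyperbolic surface subgroups of $\PSL_2(\CC)$ are controlled by Euler characteristic via the Margulis lemma and the compact core theorem.

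The main obstacle is carrying out the ping-pong argument in $\calC(S)$, which is neither proper nor locally compact: one cannot quote the Kleinian arguments directly. The replacement is Bowditch's tight-geodesic and subsurface-projection technology, through which one extracts enough control on how commutators accumulate along axes to push through the bound on displacements. Once that bound is in hand, acylindricity closes the argument mechanically.
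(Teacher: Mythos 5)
The paper does not prove this theorem; it is stated as Bowditch's result and cited to \cite{Bowditch} with no argument. So there is nothing internal to compare against. Assessing your sketch on its own terms, the broad strategy---conjugate to bound displacement of a standard generating set at a basepoint in $\calC(S)$, then use acylindricity to bound the number of possible generator images---is a reasonable outline and not far in spirit from how Bowditch's curve-complex machinery is typically deployed, but as written it has two genuine gaps.

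First, the heart of the argument, the claim that the minimal displacement of the generators is bounded by some $D(\chi)$ independent of the representation $\rho$, is only asserted. You say a ping-pong argument using the surface relation $\prod_i[a_i,b_i]=1$ would force a contradiction when displacements or axis separations are large, and you correctly flag that naive ping-pong fails because $\calC(S)$ is not proper or locally compact; but you then close the gap only by naming ``tight-geodesic and subsurface-projection technology,'' which is not an argument. The issue is real: a priori a purely pseudo-Anosov surface subgroup need not be convex cocompact or even quasi-isometrically embedded in $\calC(S)$ (the present paper leaves convex cocompactness of its examples open), so there is no soft reason the orbit map should have uniformly bounded generator displacement. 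This step is exactly where the theorem's difficulty lives, and the sketch does not do it.

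Second, the finiteness endgame as stated is incorrect. You write that acylindricity ``combined with local finiteness of $\calC(S)$ at vertices'' bounds the number of elements moving a fixed vertex a bounded distance. The curve complex is not locally finite---every vertex has infinite valence---and moreover vertex stabilizers in $\Mod(S)$ are infinite, so bounded displacement at a single point does not by itself leave only finitely many possibilities for a generator. Acylindricity can be made to work here, but it needs two points far apart: one typically takes a long segment on the loxodromic axis of one generator (using the bounded-displacement hypothesis to keep the other generators near both endpoints) and only then invokes the acylindricity constants to cap the number of candidate mapping classes. As written, your final step quietly appeals to a false property of $\calC(S)$.
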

\noindent Our main theorem provides a lower bound on the number of these conjugacy classes.
\newcommand{\TheoremCounting}{Let $S$ be a closed surface of genus at least $4$ or a thrice--punctured torus.  Then the number of commensurability classes of purely pseudo-Anosov subgroups of $\Mod(S)$ that are isomorphic to the fundamental group of a surface of genus at most $h$ is bounded below by a strictly increasing linear function of $h$.}
\begin{theorem}\label{theorem.counting}
\TheoremCounting
\end{theorem}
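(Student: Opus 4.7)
The plan is to use Theorem \ref{T:type preserving fig 8} to transport closed quasifuchsian surface subgroups of $\pi_1(M_8)$ into $\Mod(S_{1,3})$, count their commensurability classes there, and then transfer to $\Mod(S_{g,0})$ via a branched cover. The starting observation is that for any closed quasifuchsian surface subgroup $H\le\pi_1(M_8)$ containing no parabolic elements, the type-preserving representation $\rho\colon\pi_1(M_8)\to\Mod(S_{1,3})$ is injective on $H$ and sends $H$ isomorphically onto a purely pseudo-Anosov surface subgroup of $\Mod(S_{1,3})$ of the same genus, because every nontrivial element of $H$ is hyperbolic in $\pi_1(M_8)$ and hence maps to a pseudo-Anosov, which is infinite-order.

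Next, I would appeal to the work cited in the introduction (Masters--Zhang, Baker--Cooper, Cooper--Futer, Kahn--Wright) to extract for each $n\ge 1$ a closed quasifuchsian purely hyperbolic surface subgroup $H_n\le\pi_1(M_8)$ of genus at most $Cn+D$, such that the $H_n$ lie in pairwise distinct commensurability classes of $\pi_1(M_8)$. The linear bound on genus should be extractable from the quantitative features of these constructions, for example by iterating a Masters--Zhang-type construction and controlling the complexity of each new surface, but this is a point that deserves care.

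The main obstacle is then showing that the $\rho(H_n)$ remain in pairwise distinct commensurability classes in $\Mod(S_{1,3})$, rather than collapsing under $\rho$. I would leverage Greenberg's theorem on commensurators of quasifuchsian groups: the commensurator in $\PSL_2(\CC)$ of $H_n$ equals $H_n$ up to finite index. Hence if $\rho(H_i)$ and $\rho(H_j)$ shared a finite-index subgroup in $\Mod(S_{1,3})$, one could pull it back to a subgroup of $\pi_1(M_8)$ of finite index in both $H_i$ and $H_j$ (after first passing, if necessary, to a finite-index subgroup of $\pi_1(M_8)$ on which $\rho$ is injective), contradicting non-commensurability upstairs. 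Verifying that such a pullback argument really lifts commensurability against the possible kernel of $\rho$ is the technically delicate heart of the argument.

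Finally, to pass from $\Mod(S_{1,3})$ to $\Mod(S_{g,0})$ for each $g\ge 4$, I would use the regular branched covers $S_{g,0}\to S_{1,0}$ over three points from the proof of Theorem \ref{theorem.ppAsurfaces}. By Birman--Hilden, these induce embeddings of a finite-index subgroup of $\Mod(S_{1,3})$ into $\Mod(S_{g,0})$ modulo the finite deck group; intersecting each $\rho(H_n)$ with this finite-index subgroup and lifting yields a purely pseudo-Anosov surface subgroup of $\Mod(S_{g,0})$ whose genus is still $O(n)$. Because the kernel of the Birman--Hilden map is finite and the intersection has finite index, pairwise non-commensurability passes through, giving the desired strictly increasing linear lower bound in both $\Mod(S_{1,3})$ and $\Mod(S_{g,0})$.
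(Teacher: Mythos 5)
Your proposal follows the same broad route as the paper---transport surface subgroups from $\pi_1(M_8)$ into $\Mod(S_{1,3})$ via the type-preserving representation, then pass to $\Mod(S_{g,0})$ by the branched-cover trick---but it has two genuine gaps, one of which you flag yourself.

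First, the quantitative input is missing. You want a sequence of pairwise non-commensurable closed purely hyperbolic quasifuchsian subgroups $H_n \le \pi_1(M_8)$ with genus growing at most linearly in $n$, and you propose to extract this from Masters--Zhang/Kahn--Wright type constructions ``by iterating and controlling complexity.'' That step is not a detail: none of those constructions comes with an explicit linear bound on the number of commensurability classes as a function of genus, and it is not obvious how to extract one. The paper sidesteps this entirely by restricting to \emph{totally geodesic} (cocompact fuchsian) surfaces in the figure-eight knot complement and citing a concrete counting result of Vulakh, which directly supplies the linear lower bound on commensurability classes of cocompact fuchsian subgroups of $\pi_1(M_8)$ as a function of genus. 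Without that (or an equivalent quantitative source), your argument produces infinitely many commensurability classes but no function of $h$.

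Second, the transfer of non-commensurability through $\rho$ is where your Greenberg argument actually fails. Commensurability of subgroups of $\Mod(S_{1,3})$ is up to conjugation: $\rho(H_i)$ and $\rho(H_j)$ are commensurable if some $g \in \Mod(S_{1,3})$ conjugates one so that they share a finite-index subgroup. You only treat the case where they literally share a finite-index subgroup, in which case injectivity of $\rho$ does let you pull back. But the conjugating element $g$ need not lie in the image of $\rho$, so there is nothing in $\pi_1(M_8)$ to pull it back to, and Greenberg's theorem (which concerns commensurators inside $\PSL_2(\CC)$, not inside a mapping class group) does not help. The paper handles exactly this point in the proof of Corollary~\ref{corollary.surfaces}: it first constrains $g$ to lie in $\mathcal{N} = \rho_{\{z,w\}}^{-1}(N)$, the preimage of the normalizer of $\langle F_{\mathbf{0}}\rangle$, shows $[\mathcal{N}:\mathcal{G}]$ is finite, and then uses the retraction $r \colon \mathcal{G} \to \Delta_f(\Gamma)$ coming from the map $M_{\hat f^2} \to M_{f^2}$ (Lemma~\ref{L:good maps and retractions}) to replace $g$ with an element that \emph{is} in the image. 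A parallel issue arises again when passing to $\Mod(S_{g,0})$, where the paper controls conjugacy via the equivariant isometric embedding $\mathcal{T}(T^2_X)\hookrightarrow\mathcal{T}(S)$ and proper discontinuity, rather than a purely group-theoretic pull-back. These two retraction/rigidity mechanisms are the load-bearing content that your proposal does not supply.
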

This estimate is in fact a \textit{lot} smaller than the number of all purely pseudo-Anosov surface subgroups of $\Mod(S)$ of genus at most $h$.  Indeed, after the initial circulation of this article, X. Han, Z. Rao, and J. Wan \cite{HanRaoWan} proved that the number of commensurability classes of genus $h$ purely pseudo-Anosov subgroups of $\Mod(S_g)$ is at least $(ch)^{2h}$ for some $c > 0$ once $h$ is sufficiently large.  
Their proof was inspired by that of Kahn and Markovi\'c \cite{KahnMarkovic2012Count}, who show that there is a constant $c$ depending only on the injectivity radius so that the number of commensurability classes of closed incompressible surfaces of genus $h$ in a closed hyperbolic $3$--manifold is at least $(ch)^{2h}$.  In \cite{HanRaoWan}, Han, Rao, and Wan prove an analogous lower bound for the number of commensurability classes of closed incompressible surfaces of genus $h$ in any cusped hyperbolic $3$--manifold and then appeal to our type-preserving homomorphisms of $\pi_1(M_8)$ into mapping class groups.

\subsection{Historical notes and other approaches}

\subsubsection{Historical comments}
As far as we are aware, the first mentions of Questions \ref{question.hyperbolic.bundle} and \ref{question.PPA} in the literature occur in the 1990s, though we expect that Question \ref{question.PPA} has been around longer than that. For example, Misha Kapovich \cite{KapovichPrivate}  attributes Question \ref{question.hyperbolic.bundle} to Geoff Mess in 1991, and noted that Question \ref{question.PPA} arose naturally at the same time.

Question \ref{question.hyperbolic.bundle} appears as Question 15 of \cite{Kapovich1998}, published 1998, and was put to W.~J.~Harvey in the 1990s by L. Potyagailo \cite{GonzalezDiezHarvey1999}---see also Question 4.1 of \cite{Reid}.  

Question \ref{question.PPA} was also put to Harvey by Potyagailo \cite{GonzalezDiezHarvey1999} sometime in the nineties, and variants were asked by Mosher \cite{Mosher1997} in 1997 and Kapovich \cite{Kapovich1998} in 1998, who both asked if there are purely pseudo-Anosov groups that are not free.  See also Questions 1.1 of \cite{Reid}, Problem 4.1 of \cite{Mosher2006problems}, and Question 1.3 of \cite{LeiningerReid}.
Kapovich also asked \cite{Kapovich1998} if there are $\delta$--hyperbolic surface-by-surface groups, while Mosher \cite{Mosher1997} and M. Mj \cite{Mitra} asked if there are hyperbolic extensions $1 \to K \to \Gamma \to H \to 1$ where $K$ is nonelementary and $H$ is not free.  

A natural question between Questions \ref{question.big} and \ref{question.hyperbolic.bundle} (asked to us by D.~Fisher) is whether there are surface bundles over surfaces that admit metrics of negative sectional curvature.  
While we could not find this exact question in the literature,  Reznikov's 1993 paper \cite{Reznikov}, particularly his Corollary F.3, makes it clear that this question has also been around for a while.

A variant of Question~\ref{Q:hyperbolic-n} was posed by Reid (see Question~4.10 \cite{Reid}).  The type--preserving assumption here is important, as M. Bridson \cite{Bridson} showed that many hyperbolic $n$--manifold groups embed into mapping class groups, including {\em all} hyperbolic $3$--manifold groups, and, more generally, all virtually special groups.

\subsubsection{Other surface groups} \label{sec:other surface groups}
Representations of surface groups into mapping class groups arise naturally in topology and algebraic topology as monodromies of families of Riemann surfaces. See \cite{Atiyah1969}, \cite{Hirzebruch1969}, \cite{Kodaira1967},
\cite{GonzalezDiezHarvey1999},
\cite{BryanDonagi2002},
and
\cite{SalterTshishiku2020}, for example.
In light of Theorem \ref{theorem.ppAsurfaces}, it is natural to look for other examples of purely pseudo-Anosov surface groups in mapping class groups, and for the reader's benefit we collect here some notable surface subgroups one might look to. 

The \textit{universal} example of a family is the universal curve, which assigns to each point of the moduli space the corresponding algebraic curve. 
This family is a quotient of the Bers fibration over Teichm\"uller space \cite{Bers1973}, which is naturally identified with the Teichm\"uller space of the one punctured surface. Furthermore, the action of $\pi_1(S)$ on the fiber gives a faithful representation to the mapping class group of the once--punctured surface whose image is the Birman kernel \cite{Birman}.
By a theorem of I. Kra \cite{Kra}, an element of $\pi_1(S)$ is pseudo-Anosov under this representation if and only if it is a filling loop, and so these surface groups cannot be purely pseudo-Anosov.

Even more examples arise from the universal curve, for if you take a pseudo-Anosov $g \colon S \to S$, the fundamental group of the mapping torus is the extension group of $\langle g \rangle$, and thus injects into the mapping class group of the once--punctured surface via the Birman Exact Sequence, as described above.
By \cite{MastersZhang2008}, \cite{Baker_Cooper_2000}, \cite{KahnMarkovic2012}, \cite{KahnWright2021}, \cite{CooperFuter}, and \cite{CooperLongReid1994}, these mapping tori contain many immersed incompressible surfaces, and we obtain many surface groups in mapping class groups.
I. Agol has observed \cite{AgolPrivate} that the only reducible elements arising from these representations must lie in the fiber subgroup, where being pseudo-Anosov is equivalent to being filling \cite{Kra}.
As shown in \cite{DowdallKentLeininger2014}, any purely pseudo-Anosov finitely generated subgroup arising in this way will be convex cocompact.  A similar convex cocompactness statement holds when $g$ is reducible \cite{LeiningerRussell}, though all purely pseudo-Anosov subgroups are free in this case.
See \cite{KentLeininger2014} for a discussion of this approach and a geometric criterion to be pseudo-Anosov that serves as an alternative to Kra's \cite{KentLeininger2014} in this setting.  The following question remains open and is quite natural as it would provide examples of convex cocompact surface subgroups of mapping class groups.

\begin{question} Are any of the surface subgroups that arise from the mapping torus of $g$ as above purely pseudo-Anosov?
\end{question}

There are several other potential constructions of surface subgroups worthy of investigation.  We describe three here.  

(1) Agol observed that a small number of moduli spaces can be naturally completed to complex hyperbolic orbifolds, and one may attempt to find a totally geodesic surface that dodges the singularities (see \cite[Section~4.3]{Reid}).

(2) One might hope to make the Kahn--Markovi\'c/Kahn--Wright strategy work in the moduli space (Kahn and Wright's work \cite{KahnWright2021} was motivated by this).

(3) At the 2007 Cornell Topology Festival, W.~Thurston proposed yet another approach to the second author, similar in spirit to the constructions in \cite{Atiyah1969}, \cite{Kodaira1967}, and \cite{GonzalezDiezHarvey1999}.  Thurston suggested finding a ``sufficiently complicated'' multisection of the trivial bundle $S \times S$, which produces a map of the base into the configuration space of several points on $S$, and hence a representation of the fundamental group into the mapping class group.  
While this is not too far from the ideas of the authors and Wright discussed below, the authors could never crystallize Thurston's ideas into a theorem.

\begin{question} Can (1), (2), or (3) be carried out to produce purely pseudo-Anosov closed surface subgroups of mapping class groups?

\end{question}

For completeness, we mention some examples that are known {\em not} to be purely pseudo-Anosov.  Perhaps the most robust construction comes from embeddings of right-angled Artin groups into mapping class groups (considered for example in \cite{CrispParis}, \cite{ClayLeiningerMangahas}, \cite{Koberda}, \cite{Runnels}, and \cite{Seo}).  Many right-angled Artin groups contain numerous surface subgroups  (see \cite{DromsServatius}, \cite{Abrams}, \cite{GordonLongReid}, \cite{CrispWiest}, \cite{CrispSageevSapir}, \cite{KimSam}, and \cite{KimThesis}), but these will {\em never} be purely pseudo-Anosov---see \cite{ClayLeiningerMangahas}.  Another construction inspired from Kleinian groups is given in \cite{LeiningerReid}, and while ``most'' elements are pseudo-Anosov, there is always at least one conjugacy class of non-pseudo-Anosov elements.

\maketitle
\subsection{Sketch of the proof of Theorem \ref{T:type preserving fig 8}}

Wright has posed \cite{Wright.private} the following question, which arose in relation to conversations between him and Kahn.

\begin{question}[Wright \cite{Wright.private}]\label{question.wright} Does there exist a fixed--point-free homeomorphism  $f \colon S \to S$ of a closed hyperbolic surface $S$ with the property that every essential closed curve $\gamma$ on $S$ is homotopically distinct from and fills with its image $f(\gamma)$? 
\end{question}

Wright's motivation for asking this question was his observation that, if there were such a homeomorphism, then we would have a $\pi_1$--injective map \linebreak $x \mapsto \{x, f(x)\}$ of $S$ into the configuration space of two points on $S$, and,   moreover, the image of the fundamental group would be purely pseudo-Anosov in the fundamental group of the configuration space (viewed as a subgroup of the mapping class group of the twice--punctured $S$), thanks to an analogue of Kra's Theorem \cite{Kra} due to Imayoshi, Ito, and Yamamoto \cite{ImayoshiItoYamamoto}.  See Lemma~\ref{lemma.KraTWO.electric.bugaloo} in Section \ref{S:setup} below.

In \cite{Atiyah1969} and \cite{Kodaira1967}, M. Atiyah and K. Kodaira do something similar, though rather than requiring that curves fill with their image, the homeomorphism is taken to be holomorphic---see also G. Gonz\'alez-D\'iez and Harvey \cite{GonzalezDiezHarvey1999}.
Note that a mapping class represented by a holomorphic map is necessarily of finite order, and that, conversely, any finite order mapping class may be realized by a holomorphic map (in fact an isometry of some hyperbolic metric), by J. Nielsen's realization theorem \cite{Nielsen1943}.
The associated bundles admit a complex structure, and since the moduli space of curves is a quasiprojective variety, the map from the base to the moduli space can be taken to be algebraic---by the GAGA principle \cite{Serre}---and thus as in Atiyah's and Kodaira's construction, the bundle forms a complex algebraic family.
On the other hand, D. Futer \cite{Futer2025} has observed that a finite order homeomorphism can never have the second property in Question~\ref{question.wright}.

To illustrate the difficulty of Wright's question, note that T. Aougab, Futer, and S. Taylor \cite{AougabFuterTaylor} have recently shown that the number of fixed points of a pseudo-Anosov homeomorphism is coarsely bounded below by its curve complex translation length, which must be at least three if essential curves fill with their image.

One of our key innovations here is our observation that fixed--point free homeomorphisms of surfaces not only give us representations of the associated surface groups into mapping class groups, but representations of the fundamental groups of the mapping tori---see Corollary \ref{C:extended representation}. 
This implies that if Wright's question has an affirmative answer for some pseudo-Anosov $f$, the resulting purely pseudo-Anosov surface subgroup would have infinite index in its normalizer, forbidding it from being convex cocompact.\footnote{Wright was also aware of this normalizing behavior and its implication for non-convex cocompactness.}

While we are unable to answer Wright's question, we do have available a fixed--point-free homeomorphism of a \textit{noncompact} surface $S$ with the property that every essential loop fills with its image. 
To describe this, first consider
\[
L=
	\left[
   		 \begin{matrix}
   		 2   &   1   \\
   		 1   &   1   \\
   		 \end{matrix}
	\right]
\]
acting on $\RR^2$ by left multiplication on column vectors.  This linear transformation descends to an affine homeomorphism $f \colon T^2 \to T^2$ of the torus $T^2 = \RR^2/\ZZ^2$, viewed as equivalence classes of column vectors.  As above, puncturing at the point $\mathbf{0}$, we write $T^2_\mathbf{0} = T^2 - \{\mathbf{0}\}$ and denote the restriction of $f$ to this punctured surface as
\[
	f_\mathbf{0} = 
	f|_{T^2_{\mathbf{0}}} \colon T^2_{\mathbf{0}} \to T^2_{\mathbf{0}},
\]
which is the monodromy of the fibration of the figure--eight knot complement.  A simple check shows that every essential loop in $T^2_\mathbf{0}$ fills with its $f_\mathbf{0}$--image---see Lemma \ref{lemma.ffillyfilly} in Section \ref{S:setup}.  A Lefschetz fixed point calculation shows that $f_\mathbf{0}$ has no fixed points---see Lemma \ref{lemma.fFPF}, also in Section \ref{S:setup}.

As mentioned above, we obtain a homomorphism from $\pi_1(M_{f_\mathbf{0}}) = \pi_1(M_8)$ into the mapping class group of the twice--punctured fiber.  It is useful to puncture the fiber $T_\mathbf{0}$ at a fixed point $z$ of the square of $f_\mathbf{0}$ and its image $fz$, resulting in the surface $T^2_{\mathbf{0}, z, fz} = T^2 - \{\mathbf{0}, z, fz\}$ and our representation
\[
	\Delta_f \colon \pi_1(M_{f_\mathbf{0}}) \to \Mod(T^2_{\mathbf{0},z,fz}).
\]
It suffices (and is more convenient) to prove that $\Delta_f$ restricted to the index two subgroup, $\pi_1(M_{f_{\mathbf{0}}^2}) < \pi_1(M_{f_\mathbf{0}})$ is type preserving. 

Now suppose $R$ is a reducible mapping class in $\Delta_f(\pi_1(M_{f_\mathbf{0}^2}))$.  
By Wright's observation,  if $R$ is contained in the image of the fiber subgroup $\pi_1(T^2_\mathbf{0})$, then it is necessarily peripheral---see Lemma~\ref{lemma.KraTWO.electric.bugaloo}.  
It therefore suffices to assume that $R$ is not in the fiber subgroup.
The reducing system in $T^2_{\mathbf{0},z,fz}$ must contain a curve bounding a disk in $T^2$, since $f^2$ is Anosov on $T^2$.  
This disk must contain at least two of the points from $\{\mathbf{0},z,fz\}$, and so we may forget one of them and extend $R$ over the twice--punctured torus to a mapping class that is still reducible.

If we forget $z$ or $fz$, we obtain a homomorphism of $\pi_1(M_{f_\mathbf{0}^2})$ into the mapping class group of the twice--punctured torus, and it is not too difficult to see that this is, up to conjugacy, the ``usual'' representation given by the Birman exact sequence \eqref{Eq:embedding SES}.  Rather surprisingly, the same is true if we forget $\mathbf{0}$---see Lemma \ref{L:any 1 of 3}.   The problem of showing that the image of $R$ in the mapping class group of the twice--punctured torus necessarily comes from a peripheral element of $\pi_1(M_{f_\mathbf{0}^2})$ can then be translated into a $3$--manifold topology problem that is easily solved with classical techniques, thus completing the proof of the theorem---see the proof of Theorem~\ref{S:main theorem statement} in Section \ref{Section.forgetting.punctures}.

As the proof of Lemma \ref{L:any 1 of 3} in the case that we forget $\mathbf{0}$ is perhaps the most mysterious point, we briefly describe the idea in this case.  Using an explicit description of $\Delta_f$---see Corollary \ref{C:extended representation} and Section \ref{S:main theorem statement}---we are reduced to proving that the restriction to the fiber subgroup is an isomorphism onto the Birman kernel.  To prove this, given $\gamma$ in $\pi_1(T^2_{\mathbf{0}})$, we view $\Delta_f(\gamma)$ as a $2$--strand braid on $T^2_{\mathbf{0}}$ through $z,fz$.  Forgetting $\mathbf{0}$, we obtain a $2$--strand braid on $T^2$ over $z$ and $fz$, and using the group action of $T^2 \cong S^1 \times S^1$ on itself, we ``straighten'' one strand to produce a $1$--strand braid on $T^2_z$.  The fact that the homomorphism is an isomorphism is then a simple computation.

\begin{remark}
The argument works just as well for any affine homeomorphism of trace $3$ and determinant $1$ or trace $1$ and determinant $-1$.
\end{remark}

\medskip
\noindent \textbf{Acknowledgments.}
The authors have had many fruitful conversations with many people over the last two decades while considering the problems treated here. 
We extend special thanks to
Ian Agol,
Jeff Brock,
Ken Bromberg,
Danny Calegari,
Spencer Dowdall,
Nathan Dunfield,
Matthew Durham,
Cameron Gordon,
Daniel Groves,
Ursula Hamenst\"adt,
Jeremy Kahn,
Darren Long,
Dan Margalit,
Vlad Markovi\'c,
Joe Masters,
Yair Minsky, 
Mahan Mj,
Kasra Rafi,
Alan Reid,
Jacob Russell,
Saul Schleimer,
Alessandro Sisto,
Matthew Stover,
William Thurston,
and
Alex Wright for many illuminating conversations.
Alex Wright, in particular, deserves special thanks for so generously sharing his approach via Question~\ref{question.wright} with us. 
We would also like to thank
Ian Agol,
David Fisher,
David Futer,
Misha Kapovich,
Sang-hyun Kim,
Dan Margalit,
Curt McMullen,
Mahan Mj,
Alan Reid,
Sam Taylor, and
Alex Wright
for comments on an earlier draft of this paper.
We thank the referee for their detailed comments that have significantly improved the exposition.
We thank Cameron Gordon and Alan Reid for teaching us persistence.

\section{Birman's Exact Sequence} \label{S:BES}

Suppose $S$ is a compact orientable surface punctured at a finite set of points.  If $X \subset S$ is any finite subset, we write $S_X$ to denote $S-X$, the surface $S$ punctured at the additional set of points $X$.  Any homeomorphism $h \colon S \to S$ with $h(X) = X$ restricts to a homeomorphism $h_X \colon S_X \to S_X$.  Conversely, any homeomorphism $S_X \to S_X$ that preserves the punctures associated to $X$---the {\em $X$--punctures}---arises in this way.  If $X = \{x\}$,  we  write $S_x = S_X$ and $h_x = h_X$.  We write $\LB h_X \RB$ for the isotopy class of $h_X$.  

Given $S$ and a (possibly empty) finite subset $X \subset S$ as above, we write $\Mod(S_X)$ for the mapping class group of $S_X$, the group of orientation preserving homeomorphisms of $S_X$ up to isotopy, and 
\[
	\Mod(S_X,X) = \{ \LB h_X \RB \ | \ \mathrm{with} \ h(x) = x \ \mathrm{for \ all} \ x \in X \}
\]
for the mapping classes fixing each of the $X$--punctures.
The assignment $h_X \mapsto h$ descends to a homomorphism
\[
	\rho_X \colon \Mod(S_X,X) \to \Mod(S),
\]
for if $h_X$ and $h_X'$ are isotopic, so are $h$ and $h'$.  The homomorphism $\rho_X$ ``forgets'' (or ``fills in'') the $X$ punctures.

If $h_X \colon S_X \to S_X$ represents an element of $\ker(\rho_X)$, then there is an isotopy 
\[
H \colon S \times [0,1] \to S
\]
from $h$ to the identity $\mathds{1}_S$.  We write $h^t = H( \, \cdot \, , t) \colon S \to S$ for this $1$--parameter family of homeomorphisms where $h^0 = h$ and $h^1 = \mathds{1}_S$.  
In this case, $t \mapsto h^t(X)$ is a loop in $\Conf_n(S)$, the configuration space of $n=|X|$ ordered distinct points on $S$.  
Recall that this is defined to be the open subset of the $n$--fold product given by
\[
	\Conf_n(S) = S \times S \times \cdots \times S - \left\{(x_1,\ldots,x_n) \ | \ x_i = x_j \ \mathrm{for \ some} \ i \neq j\right\}.
\]
The loop $t \mapsto h^t(X)$ represents an element of $\PB_n(S) = \pi_1(\Conf_n(S),X)$, the $n$--strand pure braid group on $S$ (here we choose any ordering on $X$, and also use $X$ to denote the $n$--tuple of its points that this ordering defines).  Birman proved that for most surfaces $S$, this braid is uniquely determined by the element of the kernel---see Chapter 4 of \cite{Birman}, \cite{Bers1973}, and Theorem 9.1 of \cite{FarbMargalit}.

\begin{theorem}[Birman Exact Sequence] \label{T:Birman}
If $\chi(S) < 0$ and $X \subset S$ is a set of $n$ points, then there is an exact sequence
\[
	1 \to \PB_n(S) \to \Mod(S_X,X) \stackrel{\rho_X}{\longrightarrow} \Mod(S) \to 1,
\]
where the identification of an element of the kernel of $\rho_X$ with $\PB_n(S)$ is as described above.
\end{theorem}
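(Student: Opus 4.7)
The plan is to derive the sequence from the long exact sequence of homotopy groups of a fibration, following the classical template of \cite{Birman} and Theorem 9.1 of \cite{FarbMargalit}.

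First, I would topologize the group $\Homeo^+(S)$ of orientation-preserving homeomorphisms of $S$ (preserving the puncture set of $S$, if any) with the compact-open topology, fix an ordering $X = \{x_1,\ldots,x_n\}$, and consider the evaluation
\[
\mathrm{ev}\colon \Homeo^+(S) \to \Conf_n(S), \qquad h \mapsto (h(x_1),\ldots,h(x_n)),
\]
whose fiber over $X$ is the subgroup $\Homeo^+(S,X)$ of those $h$ fixing each $x_i$. Using parametrized isotopy extension for surfaces, I would check $\mathrm{ev}$ is a Serre fibration, so the long exact sequence of homotopy groups yields
\[
\pi_1(\Homeo^+(S)) \to \pi_1(\Conf_n(S),X) \to \pi_0(\Homeo^+(S,X)) \to \pi_0(\Homeo^+(S)) \to \pi_0(\Conf_n(S)).
\]

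Next I would identify the mapping class groups $\pi_0(\Homeo^+(S,X)) = \Mod(S_X,X)$ and $\pi_0(\Homeo^+(S)) = \Mod(S)$, using that a homeomorphism of $S_X$ fixing each $X$--puncture extends uniquely across $X$ to a homeomorphism of $S$ fixing $X$ pointwise, and that the same is true for isotopies. The rightmost term vanishes because $\Conf_n(S)$ is path-connected (as $S$ is connected and $\chi(S)<0$), which also gives surjectivity of $\rho_X$.

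The main obstacle, and the step that genuinely uses $\chi(S)<0$, is to show $\pi_1(\Homeo^+(S)) = 1$, so that the long exact sequence collapses to the claimed short exact sequence. For this I would invoke the theorem of Earle--Eells (extended to punctured and bordered surfaces by Earle--Schatz and Hamstrom) that the identity component of $\Homeo^+(S)$ is contractible when $\chi(S)<0$. Without this input the conclusion fails: for the sphere and the torus $\pi_1$ of the identity component of the homeomorphism group is nontrivial, and the kernel of $\rho_X$ is then a proper quotient of $\PB_n(S)$ rather than all of it.

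Finally I would verify that the connecting homomorphism $\partial\colon \pi_1(\Conf_n(S),X) \to \pi_0(\Homeo^+(S,X))$ of the fibration matches the recipe in the statement. Given $\LB h_X \RB \in \ker \rho_X$ and an isotopy $H\colon S \times [0,1] \to S$ from $h^0 = h$ to $h^1 = \mathds{1}_S$, the composition $\mathrm{ev}\circ H \colon [0,1] \to \Conf_n(S)$ is exactly the loop $t \mapsto h^t(X)$ based at $X$ described in the statement, and this path in $\Homeo^+(S)$ is the lift used to compute $\partial$ applied to that loop. Independence of the chosen isotopy $H$ is another consequence of $\pi_1(\Homeo^+(S)) = 1$, so the identification of $\ker\rho_X$ with $\PB_n(S)$ is the one asserted.
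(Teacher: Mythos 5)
The paper does not give its own proof of this theorem; it cites Chapter 4 of Birman's book, Bers, and Theorem 9.1 of Farb--Margalit, and your argument is precisely the one in those references. Running the long exact homotopy sequence of the evaluation fibration $\Homeo^+(S) \to \Conf_n(S)$ and invoking contractibility of the identity component of $\Homeo^+(S)$ for $\chi(S)<0$ (Earle--Eells, Earle--Schatz, Hamstrom) is the standard derivation, so your approach matches the source the paper relies on.
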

Given a braid in $\PB_n(S) = \pi_1(\Conf_n(S),X)$ represented by a loop $\gamma$ in $\Conf_n(S)$ based at $X$, we may write $\gamma(t) =(\gamma_1(t),\ldots,\gamma_n(t))$, where $\gamma_1,\ldots,\gamma_n$ are paths in $S$ with
\[
	(\gamma_1(0),\ldots,\gamma_n(0)) = X = (\gamma_1(1),\ldots,\gamma_n(1)).
\]
The Isotopy Extension Theorem \cite[Chapter 8, Theorem 1.3]{Hirsch} implies that there is an isotopy $g^t \colon S \to S$ with $g^0 = \mathds{1}_S$ and $g^t(X) = \gamma(t)$ for all $t$ in $[0,1]$.  We think of $g^t$ as ``point pushing'' $X$ along $\gamma$.  The reversed isotopy $h^t = g^{1-t} \colon S \to S$ is an isotopy from $h^0=g^1 \colon S \to S$ to the identity $h^1=\mathds{1}_S$, so $h^t$ point pushes $X$ along $\overline{\gamma}$, the path with the reversed orientation.  
This identifies the braid $[\gamma]$ with the mapping class in $\Mod(S_X,X)$ obtained from $\mathds{1}_S$ by point pushing {\em backwards} along $\gamma$ and restricting to $S_X$.

\begin{notremark} The notation from the discussion above will be used \linebreak throughout the paper.  Specifically, subscripts on surfaces and maps will denote points being punctured and superscripts on maps will typically denote parameters for isotopies.
\end{notremark}

\bigskip

The pure mapping class group $\PMod(S)$ of $S$ is the subgroup of $\Mod(S)$ consisting of elements represented by homeomorphisms that fix each puncture.  
If $S$ is closed and $Y \subset X \subset S$, then $\PMod(S_X) < \Mod(S_X,Y)$ and we can forget $Y$, defining a (sub) short exact sequence (when $\chi(S_{X-Y}) < 0$)
\[
	1 \to \PB_m(S_{X-Y}) \to \PMod(S_X) \stackrel{\rho_Y}{\longrightarrow} \PMod(S_{X-Y}) \to 1
\]
where $m = |Y|$.

For $z$ in $S$ and $\chi(S) < 0$, another important instance of this exact sequence may be written as
\[
	1 \to \pi_1(S,z) \to \PMod(S_z) \to \PMod(S) \to 1,
\]
since the $1$--strand braid group is just the fundamental group of $S$.

\subsection{Closed braids in mapping tori} \label{S:braids in tori}

Given a homeomorphism $h \colon S \to S$, the mapping torus $M_h$ is the quotient
\[
	M_h = S \times [0,1]/(x,0) \sim (h(x),1).
\]
We write $\frak q \colon S \times [0,1] \to M_h$ for the quotient map.
The fundamental group $\pi_1(M_h)$ is a semidirect product
\[
	\pi_1(M_h) \cong \pi_1(S) \rtimes \ZZ
\]
where the stable letter acts by $h_*$ on $\pi_1(S)$.  
We describe this explicitly when $h$ fixes a basepoint $z$ in $S$, as we make use of this throughout.  
With such an $h$, the induced map $h_* \colon \pi_1(S,z) \to \pi_1(S,z)$ is a well--defined automorphism (not just an outer automorphism).  
We then let $\frak q \colon S \times [0,1] \to M_h$ denote the quotient map and consider the inclusion $\iota \colon S \to M_h$ by $\iota(x) = \frak q(x,\tfrac12)$.  We consider $z$ as a basepoint for both $S$ and $M_h$ by identifying $z$ with $\iota(z)$. We further write $\gamma = \iota \circ \gamma$, and identify $\pi_1(S,z)$ with its image in $\pi_1(M_h,z)$ under $\iota_*$.  
The loop $\tau \colon [0,1] \to M_h$ defined by $\tau(t) = \frak q(z,t+\tfrac12)$ (modulo $1$) then represents the stable letter in the semidirect product, and, for all $\gamma$ in $\pi_1(S,z)$, we have
\[
	\tau \gamma \tau^{-1} = h_*(\gamma).
\]
Puncturing $S$ at $z$, we may then view $M_{h_{\{z\}}} \subset M_h$, where $M_{h_{\{z\}}}$ is obtained from $M_h$ by deleting the image of $\tau$.

If a finite set $X \subset S$ is preserved by a homeomorphism $h \colon S \to S$, then
\[ M_{h_X} = M_h - \mathcal L,\]
where $\mathcal L = \frak{q}(X \times [0,1])$.  

\begin{lemma} \label{L:braids and links 1}
Suppose $h \colon S \to S$ is a homeomorphism fixing a finite set $X \subset S$ with $\LB h_X \RB$ in $\ker(\rho_X)$, and let $h^t$ be the isotopy from $h$ to the identity.  Then 
\[ M_{h_X} \cong S \times S^1 - \mathcal L_0,\] 
where $\mathcal L_0$ is the image of the embedding $X \times S^1 \to S \times S^1$, $(x,t) \mapsto (h^t(x),t)$.\end{lemma}
\begin{proof}
The isotopy $h^t$ defines a homeomorphism $H \colon S \times [0,1] \to S \times [0,1]$ given by $H(y,t) = (h^t(y),t)$.  This descends to a homeomorphism
\[
	\widehat H \colon M_h \to M_{\mathds{1}_S} \cong S \times S^1.
\]
To see this, observe that points $(y,0)$ and $(h(y),1)$ identified in the domain are mapped by $H$ to $H(y,0) = (h(y),0)$ and $H(h(y),1) = (h(y),1)$, which are identified in the range.
The image $\mathcal L_0 = H(\mathcal L) \subset S \times S^1$ of $\calL$ is a link in ``closed braid form'' in the product, being transverse to the fibers $S \times \{\ast\}$ of the product structure.  This allows us to view $M_{h_X} = S \times S^1 - \mathcal L_0$.
\end{proof}

More generally, if $f \colon S \to S$ is \textit{any} homeomorphism, an isotopic homeomorphism can be expressed as $fh \colon S \to S$, where $h \colon S \to S$ is isotopic to the identity.  

\begin{lemma} \label{L:braids and links 2} If $f,h \colon S \to S$ are homeomorphisms fixing a finite set $X$ and $h$ is isotopic to the identity by an isotopy $h^t$, then $H(y,t) =(h^t(y),t)$ defines a homomorphism $M_{fh} \to M_f$, and
\[ M_{(fh)_X} \cong M_f - H(\mathcal L),\]
where $\mathcal L  \subset M_f$ is the image $\mathcal L = \frak{q}(X \times [0,1])$ by the quotient $\frak{q} \colon S \times [0,1] \to M_f$.
\end{lemma}
\begin{proof} This is nearly identical to the proof of Lemma \ref{L:braids and links 1}, so we omit it.
\end{proof}

We now return to the case of a homeomorphism $f \colon S \to S$ fixing a point $z$.   When the mapping class of $f$ has infinite order in $\Mod(S)$, the short exact sequence from the semidirect product embeds into the Birman Exact Sequence
\begin{equation} \label{Eq:embedding SES}
\begin{tikzcd}[row sep=.5cm]
	1 \arrow[r] & \pi_1(S,z) \arrow[r] \arrow[d] & \pi_1(M_f,z) \arrow[r] \arrow[d] & \langle \tau \rangle  \arrow[r] \arrow[d] & 1 \\
	1 \arrow[r] & \pi_1(S,z) \arrow[r] & \Mod(S_z,z) \arrow[r] & \Mod(S) \arrow[r] & 1
\end{tikzcd}
\end{equation}
The vertical homomorphism $\langle \tau \rangle \to \Mod(S)$ sends $\tau$ to the mapping class of $f$ and the image of $\pi_1(M_f,z)$ is precisely the preimage $\rho_z^{-1}(\langle f \rangle)$, with the loop $\tau$ in $M_f$ sent to the mapping class $f_z$.  
The next lemma describes how to think of general elements of $\pi_1(M_f,z)$.  We continue to write $\frak{q} \colon S \times [0,1] \to M_f$ for the quotient map and identify $(S,z)$ as a subset of $(M_f,z)$ via the inclusion $\iota \colon S \to M_f$ given by $\iota(x) = \frak{q}(x,\tfrac12)$.

\begin{lemma} \label{L:identifying loops} Suppose $f,h \colon S \to S$ are homeomorphisms that both fix a point $z$ in $S$.  Further suppose there is an isotopy $h^t$ from $h$ to the identity on $S$, and let $\tau$ and $\beta$ be the loops defined by $\tau(t) = \frak q(z,t+\frac12)$ (modulo 1) and $\beta(t) = h^t(z)$, for $t$ in $[0,1]$.  

Then $t \mapsto \frak q(h^t(z),t)$ is a loop in $M_f$ in the conjugacy class of $[\tau \beta]$ in $\pi_1(M_f,z)$, up to basepoint change isomorphism. Furthermore, the image of this element in $\Mod(S_z,z)$ is $\LB (fh)_z \RB = \LB f_zh_z \RB$ up to conjugacy.
\end{lemma}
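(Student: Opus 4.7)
The plan is to verify the two claims in sequence, with the first being essentially topological content and the second a direct application of the Birman Exact Sequence. Since both $f$ and $h$ fix $z$, the path $\alpha(t) := \frak q(h^t(z), t)$ satisfies $\alpha(0) = \frak q(z, 0) = \frak q(f(z), 1) = \frak q(z, 1) = \alpha(1)$, so it is indeed a loop, based at $\frak q(z, 0)$ rather than the chosen basepoint $\iota(z) = \frak q(z, \tfrac12)$.

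To prove the first claim, I would lift $\alpha$ to the ``diagonal'' path $t \mapsto (h^t(z), t)$ in $S \times [0, 1]$ and use a straight-line homotopy in the parameter square to deform it, rel endpoints, to the concatenation of $t \mapsto (h^{2t}(z), 0)$ (traversing $\beta$ at double speed in the bottom fiber) followed by the vertical segment $t \mapsto (z, 2t-1)$ from $(z, 0)$ to $(z, 1)$. Projecting to $M_f$, this gives $\alpha \simeq \beta' \cdot \gamma_1$ as loops at $\frak q(z, 0)$, where $\beta'$ is $\beta$ regarded as a loop in the fiber at $t = 0$ and $\gamma_1(t) := \frak q(z, t)$ is the vertical circle. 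Next I would change basepoint to $\iota(z)$ along $\tau_-(t) := \frak q(z, \tfrac{1-t}{2})$, the downward half of $\tau$ from $\iota(z)$ to $\frak q(z, 0)$. Two identifications then suffice: the free homotopy $\beta_s(t) := \frak q(h^t(z), s)$, $s \in [0, \tfrac12]$, deforms $\beta'$ to $\iota_*(\beta) = \beta$ while tracing the basepoint along $\tau_-^{-1}$, giving $\tau_- \cdot \beta' \cdot \tau_-^{-1} \simeq \beta$; and since $\gamma_1$ and $\tau$ both represent the positive generator of the fundamental group of the embedded circle $\frak q(\{z\} \times [0,1])$, one has $\tau_- \cdot \gamma_1 \cdot \tau_-^{-1} \simeq \tau$. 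Chaining these yields $\tau_- \cdot \alpha \cdot \tau_-^{-1} \simeq \beta \tau$ in $\pi_1(M_f, \iota(z))$, which is conjugate to $\tau \beta$.

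For the second claim, I would appeal to the embedding of short exact sequences~(\ref{Eq:embedding SES}). Under this embedding $\tau \mapsto \LB f_z \RB$ by the stated convention, while $\beta \in \pi_1(S, z) = \PB_1(S)$ corresponds under the Birman identification to the class obtained from the isotopy of $h = h^0$ to $\mathds{1}_S = h^1$ via the loop $t \mapsto h^t(z) = \beta(t)$---but this is precisely the data identifying $\LB h_z \RB \in \ker(\rho_z)$ with an element of $\pi_1(S,z)$. Therefore $\tau \beta \mapsto \LB f_z \RB \cdot \LB h_z \RB = \LB f_z h_z \RB = \LB (fh)_z \RB$, as required.

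I expect the main obstacle to be purely notational: carefully tracking the two basepoints ($\iota(z)$ versus $\frak q(z, 0)$), the orientation of $\tau$ relative to its halves $\tau_\pm$, and the composition/direction conventions implicit in Birman's identification. The underlying topological content---that the diagonal in a square is homotopic rel endpoints to a horizontal-then-vertical concatenation, and that a loop can be freely homotoped between fibers of a trivial product---is entirely elementary.
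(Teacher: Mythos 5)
Your proof is correct. The paper's own argument is terser and leans on the discussion immediately preceding the lemma: it views the loop $t \mapsto \frak q(h^t(z),t)$ as a reparametrization of $\hat H_f(\tau)$ (where $\tau$ is the vertical loop in $M_{fh}$), and obtains the conjugacy by isotoping the ambient homeomorphism $H$ to the ``combed'' homeomorphism $G$, since $\hat G_f(\tau)$ was already read off---via the downward-flow picture---as representing $[\tau\beta]$ and $\LB (fh)_z \RB$. You achieve the same end more directly: lift the loop to the graph $t \mapsto (\beta(t),t)$ in $S\times[0,1]$, homotope it rel endpoints (a linear deformation in the $[0,1]$-coordinate suffices) to the horizontal-then-vertical concatenation, project to $M_f$, and do the change-of-basepoint bookkeeping explicitly along $\tau_-$. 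Your route never needs the auxiliary map $G$ or the mapping torus $M_{fh}$, which makes it more self-contained, at the modest cost of spelling out the free-homotopy-implies-conjugacy calculus that the paper compresses into a single remark. Your conclusion $\tau_-\alpha\tau_-^{-1}\simeq\beta\tau$ is the conjugate $\beta\tau$ rather than $\tau\beta$; that is fine since the lemma only asserts the conjugacy class, and your treatment of the second claim via the Birman identification ($\tau\mapsto \LB f_z\RB$ and $[\beta]\mapsto \LB h_z\RB$) matches the paper's conventions exactly.
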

\begin{proof} We adjust the isotopy $h^t$ to be constant on initial and terminal intervals, specifically so that $h^t = h$ for $t$ in $\left[0, \tfrac14\right]$ and $h^t = \mathds{1}_S$ for $t$ in $\left[\tfrac12 , 1\right]$ (thus the effect of the isotopy all occurs for $t$ in $\left[\tfrac14,\tfrac12 \right]$).  This does not change the free homotopy class of loops $t \mapsto \frak{q}(h^t(z),t)$ in $M_f$, the homotopy classes of loops $\tau$ and $\beta$, nor the mapping classes $\LB (fh)_z) \RB = \LB f_zh_z \RB$, and so it suffices to prove the lemma under this assumption on $h^t$.  Let $H \colon M_{fh} \to M_f$ be the homeomorphism $H(y,t) = (h^t(y),t)$.  

The loop $\gamma(t) = \frak{q}(h^t(z),t)$ is based at $\frak{q}(z,1)$.  We change the basepoint back to $z = \iota(z) = \frak{q}\left(z,\tfrac12\right)$ using the path $\frak{q}\left(\{z\} \times \left[ \tfrac12, 1 \right] \right)$. This basepoint change sends the homotopy class of $\gamma$ to the homotopy class of a loop based at $z$ that traverses $\frak{q}\left(\{z\} \times \left[ \tfrac12,1 \right]\right)$, then $\frak{q}\left(\{z\} \times \left[ 0,\tfrac14\right] \right)$, and then the loop $\beta$ on $\frak{q}\left( S \times \left[ \tfrac14,\tfrac12 \right] \right)$---see Figure \ref{DDT}.  This loop is clearly in the homotopy class of $\tau \beta$, proving the first part of the lemma.

\def\arr{-{Stealth[length=3mm, width=2mm]}}
\begin{figure}[htb] \label{DDT}
\begin{center}
\begin{tikzpicture}
\node at (0,0) {\includegraphics[width = 8cm]{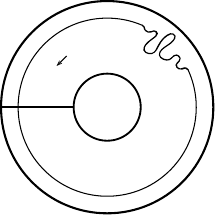}};
\filldraw[white] (-1.8,1.8) circle (.45);
\filldraw (3.33,0) circle (.05);
\draw[thick,dotted] (1.3,0) -- (4,0);
\draw[thick,\arr] (.1,-3.32) -- (-.1,-3.32);
\draw[thick,\arr] (1.96,2.4) -- (2.01,2.5);
\node at (-5,.5) {$\frak{q}(S \times \{0\})$};
\node at (-5,-.5) {$\frak{q}(S \times \{1\})$};
\node at (5.4,0) {$S = \frak{q}(S \times \{\tfrac12\})$};
\node at (3,-.3) {$z$};
\node at (0,-3) {$\tau$};
\node at (1.5,1.5) {$\beta$};
\end{tikzpicture}
\end{center}
	\caption{A representative of the loop $\tau\beta$.}
\end{figure}

This loop $\gamma$ defines an embedding of the circle into $M_f$, and the image is $\mathcal L_0 = H(\mathcal L)$, where $\mathcal L = \frak{q}(\{z\} \times [0,1])$.  By Lemma \ref{L:braids and links 2}, we have
\[ M_{f_zh_z} \cong M_{(fh)_z} \cong M_f - \mathcal L_0.\]
Since $[\tau]$ is sent to $\LB f_z\RB$ and $[\beta]$ to $\LB h_z \RB$, the loop $[\tau \beta]$ is sent to $\LB f_z h_z \RB = \LB (fh)_z \RB$, which proves the ``furthermore'' statement and completes the proof.
\end{proof}

\section{The dancing representation} \label{S:dancing}

We continue to assume that $S$ is a compact orientable surface punctured at a finite set of points.
Let $f$ be any fixed--point-free homeomorphism of $S$. Then there is a map $D \co S \to \Conf_2S$ given by 
\[
	D(x) = (x, f(x)).
\]
This induces a representation 
\[
	D_* \co \pi_1(S,z) \to \pi_1(\Conf_2S, (z, f(z)))=\PB_2(S)
\]
and the inclusion from the Birman Exact Sequence allows us to view this as a homomorphism
\[
	\Delta_f \co \pi_1(S,z) \to \Mod(S_{\{z,f(z)\}},\{z,f(z)\}).
\] 

Given a loop $\beta$ in $S$ based at $z$, there is an isotopy $h^t \colon S \to S$ from $h^0 \colon S \to S$ to the identity $h^1 = \mathds{1}_S$ with $h^t(z) = \beta(t)$ and $h^t(f(z)) = f(\beta(t))$.  More concretely, we can view $h^t$ as pushing $z$ and $f(z)$ along $\beta$ and $f(\beta)$ simultaneously, and then $\Delta_f(\beta) = h^0_{\{z,f(z)\}}$.  The next lemma was Wright's motivation for asking Question~\ref{question.wright}.

\begin{lemma} [A.~Wright] \label{lemma.KraTWO.electric.bugaloo}
Suppose $f \colon S \to S$ is a fixed--point-free homeomorphism.  
If $\beta$ lies in $\pi_1(S,z)$ and $\beta$ and $f(\beta)$ are distinct homotopy classes of curves that fill $S$, then $\Delta_f(\beta)$ is pseudo-Anosov.
\end{lemma}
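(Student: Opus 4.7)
The plan is to identify $\Delta_f(\beta)$ explicitly as a pure two-strand surface braid and then to appeal to the multi-strand generalization of Kra's Theorem due to Imayoshi, Ito, and Yamamoto, which characterizes when a point-pushing mapping class is pseudo-Anosov in terms of filling behavior of the braid strands.

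First I would unpack the definition. Composing $\beta$ with $D$ produces the loop $t \mapsto (\beta(t), f(\beta(t)))$ in $\Conf_2(S)$ based at $(z, f(z))$, so $D_*(\beta) \in \PB_2(S)$ is the pure two-strand braid whose two strands represent the free homotopy classes of $\beta$ and $f \circ \beta$. Under the Birman inclusion $\PB_2(S) \hookrightarrow \Mod(S_{\{z,f(z)\}}, \{z,f(z)\})$ from Theorem~\ref{T:Birman}, $\Delta_f(\beta)$ is then precisely the mapping class obtained by simultaneously point-pushing $z$ backwards along $\beta$ and $f(z)$ backwards along $f \circ \beta$, and restricting to $S_{\{z,f(z)\}}$.

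Next I would invoke the Imayoshi-Ito-Yamamoto criterion: a point-pushing element of the Birman kernel is pseudo-Anosov if and only if the strands of the associated pure braid are each essential in $S$, pairwise non-freely-homotopic in $S$, and collectively fill $S$. Verification of these three conditions in our case is immediate. Distinctness of the free homotopy classes of $\beta$ and $f(\beta)$ is exactly pairwise non-homotopy; the filling hypothesis is the filling condition; and essentiality of each strand is automatic, since if one of $\beta$ or $f(\beta)$ were nullhomotopic it would lie in an embedded disk, forcing the remaining single loop to fill $S$ on its own, which is impossible on a surface of negative Euler characteristic (the complement of a single loop is connected, hence not a disjoint union of disks and once-punctured disks).

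The only real obstacle is to pin down the precise version of the Imayoshi-Ito-Yamamoto theorem needed, in particular to confirm that it applies in the setting where $S$ itself has punctures (as in the main application to the once-punctured torus). This falls within the scope of their hypotheses, and once the reference is correctly stated the verification above completes the proof.
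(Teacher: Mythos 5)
Your overall approach---identifying $\Delta_f(\beta)$ as the pure two--strand braid $(\beta, f\beta)$ and appealing to Imayoshi--Ito--Yamamoto---is precisely the route the paper acknowledges (``may be applied in our setting to prove the lemma'') but then deliberately steps away from, giving instead a self--contained argument ``for the reader's convenience.'' The paper's own proof works in the mapping torus: it shows a reducing curve $\gamma$ gives an essential torus $T$ in $S \times S^1 - \mathcal L_\beta$, then splits into cases depending on whether $\gamma$ is essential, peripheral, or trivial in $S$, using Waldhausen to straighten $T$ to $\gamma \times S^1$ in the first case and the solid--torus--core argument in the last. Your appeal to the external citation is shorter, but the paper's argument has the advantage of not requiring the reader to track down and parse the (as the paper warns, technical) statement of the two--strand Nielsen--Thurston classification, and it also foreshadows the essential--torus technology reused in the proof of Theorem~\ref{theorem.type}. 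Both are legitimate; the choice is one of exposition.

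There is, however, a genuine error in your essentiality check. Your parenthetical justification---``the complement of a single loop is connected, hence not a disjoint union of disks and once-punctured disks''---is false. The complement of a non-simple loop in general position is typically disconnected, and single loops on hyperbolic surfaces certainly can fill; indeed that single loops can fill is the whole content of Kra's theorem. The correct and much easier argument is: since $f$ is a homeomorphism, $\beta$ nullhomotopic implies $f(\beta)$ nullhomotopic, and then both loops are contained in disjoint embedded disks, so $\beta \cup f(\beta)$ is disjoint from some essential simple closed curve and cannot fill. So essentiality does follow from the filling hypothesis, but not for the reason you gave.
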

\begin{proof} The main result of \cite{ImayoshiItoYamamoto} completely describes the Nielsen--Thurston type of a two--strand braid on any orientable finite--type surface, and may be applied in our setting to prove the lemma. 
As the full statement is somewhat technical, we give a self--contained proof for the reader's convenience.

Suppose to the contrary that $\Delta_f(\beta)$ is reducible. Since a pure braid is pure as a mapping class (in the sense of \cite{Ivanov}),
we may assume that a homeomorphism $h$ representing $\Delta_f(\beta)$ fixes the isotopy class of a simple closed curve $\gamma$ in $S_{\{z,f(z)\}}$.  
By Lemma~\ref{L:braids and links 1}, there is a link $\mathcal L_\beta$  in $S \times S^1$ transverse to the $S$--fibers whose complement
${S \times S^1 - \mathcal L_\beta}$ is homeomorphic to $M_h$.  
Projecting $S \times S^1$ onto the first factor, $\mathcal L_\beta$ projects to $\beta \cup f\beta$.  

The fixed curve $\gamma$ defines an essential torus $T \subset S \times S^1 - \mathcal L_\beta$ meeting each $S$--fiber in a simple closed curve.
Assume first that $\gamma$ is homotopically nontrivial in $S$.  Then work of Waldhausen \cite[(2.8) Satz]{Waldhausen1967} implies that, after an isotopy preserving the $S$--fibers, we may assume $T = \gamma \times S^1$.  The isotopy replaces $\mathcal L_\beta$ with an isotopic link $\mathcal L_\beta'$.  
Projecting $\mathcal L_\beta'$ onto the first factor of $S \times S^1$ produces the union of two curves $\beta' \cup \beta''$ homotopic to $\beta \cup f(\beta)$.  In particular, $\beta' \cup \beta''$ is disjoint from the projection of $T$, which is $\gamma$.  However, $\beta \cup f(\beta)$ is assumed to fill, hence so does $\beta' \cup \beta''$, and so the homotopically nontrivial curve $\gamma$ on $S$ must be peripheral. 
Therefore, $\gamma$ bounds a once--punctured disk $B \subset S$, and $T$ bounds $B \times S^1$ in $S \times S^1$.  
Since $T$ is essential in its complement, the link $\calL_\beta'$ must nontrivially intersect $B \times S^1$, and hence at least one of $\beta'$ or $\beta''$ must project into $B$.  
If $\beta'$ projects into $B$, then $\beta'$, and hence $\beta$, is peripheral, which then implies that $f\beta$ is peripheral as well. 
No such pair of curves can fill $S$, and so this is a contradiction.
Similarly, if $\beta''$ projects into $B$, we have that $f\beta$, and hence also $\beta$, is peripheral, again leading to a contradiction.
We conclude that $\gamma$ is homotopically trivial in $S$.

Now, since $\gamma$ is homotopically trivial, $T$ bounds a solid torus $V$ in $S \times S^1$ containing $\mathcal L_\beta$.  Since $\Delta_f(\beta)$ is a pure braid, both components of $\mathcal L_\beta$ are homotopic to a core of $V$.  In particular, they are homotopic to each other.  Projecting this homotopy to $S$ determines a homotopy from $\beta$ to $f\beta$, another contradiction.  This exhausts all possibilities for a reducible $\Delta_f(\beta)$, and we conclude $\Delta_f(\beta)$ is pseudo-Anosov.
\end{proof}

When convenient, we write $\Delta_f(\beta) = (\beta,f\beta)$ when given a loop $\beta$ in $\pi_1(S,z)$, which makes sense as an element of $\PB_2(S) = \pi_1(\Conf_2(S),(z,f(z)))$.

\subsection{The configuration space bundle}

Continue to assume that $f \colon S \to S$ is a fixed point free homeomorphism.  
Letting $\widehat f \colon \Conf_2(S) \to \Conf_2(S)$ denote the homeomorphism $\widehat f(x,y) = (f(x),f(y))$, we construct the mapping torus $M_{\widehat f}$ of $\widehat f$, which is a $\Conf_2(S)$--bundle over the circle.
The embedding $D(x) = (x,f(x))$ above defines an embedding
\[
	D \times \mathds{1}_{[0,1]} \colon S \times [0,1] \to \Conf_2(S) \times [0,1]
\]
that descends to an embedding
\[
	\overline{D} \colon M_f \to M_{\widehat f},
\]
since $D \circ f = \widehat f \circ D$.
Write $\Pi \colon \Conf_2(S) \to S$ for the projection $\Pi(x,y) = x$. 
Since $\Pi \circ \widehat f = f \circ \Pi$, we have that $\Pi \times \mathds{1}_{[0,1]}$ descends to a map
\[
	\overline{\Pi} \colon M_{\widehat f} \to M_f.
\]

\begin{lemma} \label{L:good maps and retractions} The composition $D \circ \Pi \colon \Conf_2(S) \to \Conf_2(S)$ is a retraction onto $D(S)$.  Similarly, $\overline{D} \circ \overline{\Pi} \colon M_{\widehat f} \to M_{\widehat f}$ is a retraction onto the image $\overline{D}(M_f)$.
\end{lemma}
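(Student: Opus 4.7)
The plan is to reduce both retraction statements to the single tautological identity $\Pi \circ D = \mathds{1}_S$, which holds because $\Pi(D(x)) = \Pi(x,f(x)) = x$.

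First, for the configuration space statement, I would simply unpack definitions. The image of $D \circ \Pi$ is contained in $D(S)$ by construction, since $D \circ \Pi(x,y) = D(x) = (x,f(x))$. Restricting to a point of $D(S)$, I would compute $D \circ \Pi (x,f(x)) = D(x) = (x,f(x))$, which gives the identity on $D(S)$, verifying the retraction property.

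For the mapping torus statement, the key step is to pass the identity $\Pi \circ D = \mathds{1}_S$ to the quotients. Since $\overline{D}$ and $\overline{\Pi}$ were defined as the descents of $D \times \mathds{1}_{[0,1]}$ and $\Pi \times \mathds{1}_{[0,1]}$ through the quotient maps $S \times [0,1] \to M_f$ and $\Conf_2(S) \times [0,1] \to M_{\hat f}$, their composition $\overline{\Pi} \circ \overline{D}$ is the descent of $(\Pi \circ D) \times \mathds{1}_{[0,1]} = \mathds{1}_{S \times [0,1]}$, which is $\mathds{1}_{M_f}$. Consequently, for any $p \in M_f$, I have $\overline{D}\circ\overline{\Pi}(\overline{D}(p)) = \overline{D}(p)$, so $\overline{D} \circ \overline{\Pi}$ restricts to the identity on $\overline{D}(M_f)$; and its image is clearly contained in $\overline{D}(M_f)$ by construction.

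There is not really a serious obstacle here: the lemma is essentially a bookkeeping check that the square
\[
\begin{tikzcd}
S \arrow[r,"D"] \arrow[d,"f"'] & \Conf_2(S) \arrow[d,"\hat f"] \\
S \arrow[r,"D"'] & \Conf_2(S)
\end{tikzcd}
\]
together with its mirror involving $\Pi$ are compatible with the formation of mapping tori in a way that respects the section–retraction structure $\Pi \circ D = \mathds{1}_S$. The only thing worth being careful about is that the descent of a product map $g \times \mathds{1}_{[0,1]}$ truly gives a well-defined map on the mapping tori, which was already observed in the paragraph preceding the statement.
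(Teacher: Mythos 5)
Your proof is correct and follows the same approach as the paper, which likewise reduces everything to the computation $D \circ \Pi(x,y) = (x,f(x))$ and then notes that the mapping torus statement follows from the definitions of $\overline{D}$ and $\overline{\Pi}$; you merely spell out the descent argument in more detail than the paper's one-line remark.
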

\begin{proof} The first claim is just the observation $D \circ \Pi(x,y) = (x,f(x))$.  The second follows from the first, and the definition of $\overline{D}$ and $\overline{\Pi}$.
\end{proof}
\begin{corollary} The induced map $\overline{D}_* \colon \pi_1(M_f) \to \pi_1(M_{\widehat f})$ is injective.
\end{corollary}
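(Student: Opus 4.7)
The plan is to deduce injectivity formally from the retraction statement in Lemma~\ref{L:good maps and retractions}, using standard functoriality of $\pi_1$. The key point is that the retraction $\overline{D} \circ \overline{\Pi}$ onto $\overline{D}(M_f)$ implies that $\overline{\Pi}$ is a one-sided inverse of $\overline{D}$ on the nose, not just up to homotopy.

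First I would verify, directly from the formulas, that $\Pi \circ D = \mathds{1}_S$, since $\Pi(x,f(x)) = x$. Passing to mapping tori via the commutations $D \circ f = \hat f \circ D$ and $\Pi \circ \hat f = f \circ \Pi$, the descended maps satisfy $\overline{\Pi} \circ \overline{D} = \mathds{1}_{M_f}$. (The ``retraction'' observation in Lemma~\ref{L:good maps and retractions} is exactly this identity post-composed with $\overline{D}$; extracting this pre-composition identity is the only piece of work.) Choose basepoints compatibly so that $\overline{D}$ and $\overline{\Pi}$ are basepoint-preserving; for instance, take the basepoint of $M_f$ to be the class of $(z,\tfrac12)$ and the basepoint of $M_{\hat f}$ to be $\overline{D}$ of that class.

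Applying the functor $\pi_1$ and using its compatibility with composition and identities, we obtain
\[
\overline{\Pi}_* \circ \overline{D}_* = (\overline{\Pi} \circ \overline{D})_* = (\mathds{1}_{M_f})_* = \mathds{1}_{\pi_1(M_f)}.
\]
Any map of groups with a left inverse is injective, so $\overline{D}_*$ is injective.

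There is essentially no obstacle: the entire content of the corollary is the standard fact that a retract induces an injection on fundamental groups, together with the basepoint bookkeeping. The only thing worth being careful about is checking $\overline{\Pi} \circ \overline{D} = \mathds{1}_{M_f}$ (rather than merely $\overline{D} \circ \overline{\Pi} \circ \overline{D} = \overline{D}$, which would suffice since $\overline{D}$ is an embedding but is unnecessarily indirect).
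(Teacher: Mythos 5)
Your proof is correct and is exactly the argument the paper leaves implicit: the retraction statement in Lemma~\ref{L:good maps and retractions} amounts to $\overline{\Pi}\circ\overline{D} = \mathds{1}_{M_f}$ (since $\overline{D}$ is injective), and applying $\pi_1$ gives a left inverse for $\overline{D}_*$, hence injectivity. The basepoint bookkeeping you flag is the only thing to check, and you handle it appropriately.
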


Picking a basepoint $z$, we have the isomorphism $f_* \colon \pi_1(S,z) \to \pi_1(S,f(z))$.  
Picking a path $\delta$ from $z$ to $f(z)$ in $S$, we write $\delta_* \colon \pi_1(S,f(z)) \to \pi_1(S,z)$ for the basepoint change isomorphism given by $\delta_*([\gamma]) = [\delta \gamma \overline{\delta}]$, and denote the composition of these two isomorphisms
\[ f^\delta_* = \delta_* f_* \colon \pi_1(S,z) \to \pi_1(S,z).\]
Viewing $z$ in $S \subset M_f$ as in \S\ref{S:braids in tori}, we have $\pi_1(M_f,z) \cong \pi_1(S,z) \rtimes \langle \sigma \rangle$ where the stable letter $\sigma$ acts as $f^\delta_*$.  The image path $D(\delta) = (\delta,f\delta)$ in $\Conf_2(S)$, together with $\widehat f$, similarly defines an isomorphism on $\PB_2(S) = \pi_1(\Conf_2(S),(z,f(z)))$,
\[
	\widehat f_*^{\, \delta} \colon \PB_2(S) \to \PB_2(S), 
\]
and we can thus write
\[
	\pi_1(M_{\widehat f},(z,f(z))) \cong \PB_2(S) \rtimes \left \langle \overline{D}_*(\sigma) \right\rangle,
\]
so that $\overline{D}_*(\sigma)$ acts as $\widehat f_*^{\, \delta}$, and the isomorphism $\overline{D}_*$ preserves the semidirect product structure, restricting to $D_* \colon \pi_1(S,z) \to \pi_1(\Conf_2(S),(z,f(z))) = \PB_2(S)$ on the normal subgroup.  

\begin{proposition} \label{P:ses embedding} Suppose $f$ is fixed--point-free and $\LB f \RB$ in $\Mod(S)$ has infinite order.  Then the short exact sequence coming from the semidirect product structure on $\pi_1(M_{\widehat f})$ embeds into the Birman Exact Sequence
\[ \label{Eq:embedding SES2}
\begin{tikzcd}[row sep=.5cm]
	1 \arrow[r] &[-.5cm]  \PB_2(S) \arrow[r] \arrow[d] &[-.5cm]  \pi_1(M_{\widehat f},(z,f(z))) \arrow[r] \arrow[d] & \left\langle \overline{D}_*(\sigma) \right\rangle  \arrow[r] \arrow[d] &[-.5cm]  1 \\
	1 \arrow[r] & \PB_2(S) \arrow[r] & \Mod(S_{\{z,f(z)\}},\{z,f(z)\}) \arrow[r,"\rho_{\{z,f(z)\}}"] & \Mod(S) \arrow[r] & 1.
\end{tikzcd}
\]
\end{proposition}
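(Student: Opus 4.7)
The plan is to define the middle vertical arrow by extending the Birman inclusion $\PB_2(S) \hookrightarrow \Mod(S_{\{z,f(z)\}},\{z,f(z)\})$ across the semidirect product decomposition $\pi_1(M_{\hat f},(z,f(z))) \cong \PB_2(S) \rtimes \langle \overline{D}_*(\sigma) \rangle$, to check that both squares of the diagram commute, and then to invoke the five lemma. The two outer arrows present no difficulty: the left is the Birman identification of $\PB_2(S)$ with $\ker \rho_{\{z,f(z)\}}$ from Theorem~\ref{T:Birman}, and the right sends $\overline{D}_*(\sigma) \mapsto \LB f \RB$, which is injective because $\LB f \RB$ has infinite order by hypothesis.

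The key step is to produce a mapping class $\widetilde{\varphi} \in \Mod(S_{\{z,f(z)\}},\{z,f(z)\})$ satisfying (i) $\rho_{\{z,f(z)\}}(\widetilde{\varphi}) = \LB f \RB$ and (ii) conjugation by $\widetilde{\varphi}$ on $\PB_2(S) \subset \Mod(S_{\{z,f(z)\}},\{z,f(z)\})$ is exactly $\hat f^\delta_*$; once such a $\widetilde{\varphi}$ is chosen, the middle map $b \cdot \overline{D}_*(\sigma)^n \mapsto b \cdot \widetilde{\varphi}^n$ is a well-defined homomorphism. To build $\widetilde{\varphi}$, first apply the Isotopy Extension Theorem to the path $\overline{D(\delta)}$ in $\Conf_2(S)$, which joins $\hat f(z,f(z)) = (f(z),f^2(z))$ to $(z,f(z))$, obtaining an ambient isotopy $g^s \colon S \to S$ with $g^0 = \mathds{1}_S$ and $(g^s(f(z)),g^s(f^2(z))) = \overline{D(\delta)}(s)$. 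Set $\widetilde{f} = g^1 \circ f$; this is isotopic to $f$ and fixes both $z$ and $f(z)$, and the induced isotopy $f^s := g^s \circ f$ traces out $\overline{D(\delta)}$ at the point $(z,f(z))$ in $\Conf_2(S)$. Define $\widetilde{\varphi} := \LB \widetilde{f}_{\{z,f(z)\}} \RB$; then (i) is automatic, and (ii) follows from a two-dimensional homotopy argument using $\hat{f^s}$ that identifies $\hat{\widetilde{f}}_*$ on $\pi_1(\Conf_2(S),(z,f(z)))$ with $D(\delta)_* \hat f_* = \hat f^\delta_*$.

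With $\widetilde{\varphi}$ in hand, the right square commutes immediately from $\rho_{\{z,f(z)\}}(b \cdot \widetilde{\varphi}^n) = \LB f \RB^n$, and the left square is just the Birman inclusion. Injectivity of the middle arrow follows from the five lemma, completing the proof. The main obstacle I anticipate is the verification that $\hat{\widetilde{f}}_*$ equals $\hat f^\delta_*$ exactly, and not merely up to an inner automorphism of $\PB_2(S)$; this requires careful bookkeeping of the path traced by $\hat{f^s}$ through $(z,f(z))$ relative to the basepoint-change isomorphism $D(\delta)_*$ built into the definition of $\hat f^\delta_*$.
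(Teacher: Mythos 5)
Your proposal is correct and matches the paper's proof essentially step for step: the paper also uses isotopy extension to produce a homeomorphism $g$ isotopic to the identity with $g(f(z))=z$, $g(f^2(z))=f(z)$, sets $\widetilde{\varphi}=\LB (gf)_{\{z,f(z)\}}\RB$, and verifies via a homotopy of paths in $\Conf_2(S)$ (citing \cite[Lemma 1.19]{Hatcher}) that conjugation by $\widetilde{\varphi}$ on $\PB_2(S)$ agrees with $\hat f_*^\delta$. The only cosmetic differences are the reversed time-orientation of the isotopy and your explicit appeal to the five lemma for injectivity, which the paper leaves implicit.
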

\begin{proof} Let $g \colon S \to S$ be a homeomorphism so that $g(f(z)) = z$ and $g(f^2(z))=f(z)$,  so that $g$ is isotopic to the identity by an isotopy $g^t$ with $g^0 = g$, $g^1 = \mathds{1}_S$, and so that $g^t(f(z)) = \delta(t)$ and $g^t(f^2(z)) = f \delta(t)$.  Then
\[
	(g^tf,g^tf) \colon \Conf_2(S) \to \Conf_2(S),
\]
defines an isotopy from $(gf,gf)$ to $(f,f)$ and $(g^tf(z),g^tf(f(z)) = (\delta(t),f\delta(t))$. Consequently, for any loop $(\alpha,\beta)$ in $\Conf_2(S)$ based at $(z,f(z))$, we have
\[
	(gf\alpha,gf\beta) \simeq \left(\delta (f\alpha)\overline{\delta},(f\delta)(f\beta)(f\overline{\delta})\right)
\]
as loops based at $(z,f(z))$.  See e.g. \cite[Lemma 1.19]{Hatcher}.

We now construct the embedding of short exact sequences.  For this, we start by defining the homomorphism on the stable letter $\overline{D}_*(\sigma)$ in $\pi_1(M_{\widehat f})$, sending it to the element $\LB (gf)_{\{z,f(z)\}} \RB$.  Any element of the kernel of $\rho_{\{z,f(z)\}}$ is represented by $h_{\{z,f(z)\}} \colon S_{\{z,f(z)\}} \to S_{\{z,f(z)\}}$, and we let $h^t$ be an isotopy with $h^0=h$ and $h^1=\mathds{1}_S$, so that $\LB h \RB$ corresponds to the pure braid represented by the loop 
\[
	t \mapsto (\alpha(t),\beta(t))=(h^t(z),h^t(f(z))).
\]
Conjugating $\LB h_{\{z,f(z)\}} \RB$ by $\LB (gf)_{\{z,f(z)\}} \RB$ defines another element of the kernel of $\rho_{\{z,f(z)\}}$ and the associated braid is represented by the loop
\begin{align*}
	t 	& \mapsto (gf) h^t (gf)^{-1}(z,f(z))\\
		& =  gfh^t(z,f(z)) \\
		& =  (gf(\alpha(t)),gf(\beta(t)))\\
		& \simeq  \left(\big(\delta (f\alpha)\overline{\delta}\big)(t),\big((f\delta)(f\beta) (f(\overline{\delta}))\big)(t)\right), 
\end{align*}
as explained above.  It follows that the image of $\overline{D}_*(\sigma)$ conjugates the image of $(\alpha,\beta)$ to the image of the $\overline{D}_*(\sigma)$ conjugate of $(\alpha,\beta)$, and thus we have a well--defined homomorphism from $\pi_1(M_{\widehat f},(z,f(z)))$ to $\Mod(S_{\{z,f(z)\}},\{z,f(z)\})$ that is the ``identity'' on $\PB_2(S)$.  Sending the quotient $\langle \overline{D}_*(\sigma) \rangle$ to $\langle \LB f \RB \rangle$ then completes the embedding of the short exact sequence.
\end{proof}

We write $\Delta_f \colon \pi_1(M_f) \to \Mod(S_{\{z,f(z)\}})$ for the composition of $\overline{D}_*$ with the embedding from the Proposition.
\begin{corollary} \label{C:extended representation} If $f$ is fixed point free and $\LB f \RB$ in $\Mod(S)$ has infinite order, then
\[ \label{Eq:embedding SES3}
\begin{tikzcd}[row sep=.5cm]
	1 \arrow[r] &[-.5cm] \pi_1(S,z) \arrow[r] \arrow[d,"D_*"] &[-.5cm]  \pi_1(M_f,z) \arrow[r] \arrow[d,"\Delta_f"] & \langle \sigma \rangle  \arrow[r] \arrow[d] &[-.5cm] 1 \\
	1 \arrow[r] & \PB_2(S) \arrow[r] & \Mod\left(S_{\{z,f(z)\}},\{z,f(z)\}\right) \arrow[r,"\rho_{\{z,f(z)\}}"] & \Mod(S) \arrow[r] & 1,
\end{tikzcd}
\]
is an embedding of short exact sequences.
\end{corollary}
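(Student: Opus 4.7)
The plan is to derive the corollary by composing two embeddings of short exact sequences that are already essentially in hand. First I would identify the top row of the diagram with the semidirect product exact sequence
\[ 1 \to \pi_1(S,z) \to \pi_1(M_f,z) \to \langle \sigma \rangle \to 1 \]
coming from $\pi_1(M_f,z) \cong \pi_1(S,z) \rtimes \langle \sigma \rangle$, exactly as described preceding Proposition~\ref{P:ses embedding}. The bottom row is the Birman Exact Sequence from Theorem~\ref{T:Birman} applied with $X = \{z, f(z)\}$.

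Next I would construct an intermediate embedding of the top row into the semidirect product exact sequence
\[ 1 \to \PB_2(S) \to \pi_1(M_{\hat f},(z,f(z))) \to \langle \overline{D}_*(\sigma)\rangle \to 1. \]
By Lemma~\ref{L:good maps and retractions}, the inclusion $\overline{D}\colon M_f \hookrightarrow M_{\hat f}$ admits $\overline{D} \circ \overline{\Pi}$ as a retraction onto its image, so the induced homomorphism $\overline{D}_*\colon \pi_1(M_f,z) \to \pi_1(M_{\hat f},(z,f(z)))$ is injective, exactly as recorded in the corollary following that lemma. As already noted in the discussion preceding Proposition~\ref{P:ses embedding}, $\overline{D}_*$ respects the semidirect product structures: its restriction to the normal subgroup $\pi_1(S,z)$ is $D_*$, and it sends the stable letter $\sigma$ to $\overline{D}_*(\sigma)$, which serves as the stable letter of the target. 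Hence the two squares commute and we obtain an embedding of the top row into this intermediate sequence.

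Finally, Proposition~\ref{P:ses embedding} embeds this intermediate sequence into the Birman Exact Sequence, acting as the identity on $\PB_2(S)$ and sending $\overline{D}_*(\sigma)$ to $\LB f \RB$. Composing the two embeddings and defining $\Delta_f$ as the resulting middle vertical map yields the diagram asserted in the corollary, with the right-hand vertical map being $\sigma \mapsto \LB f \RB$. Injectivity of each vertical arrow follows from the injectivity of the corresponding maps in the two component embeddings, and commutativity of the two squares is inherited from the commutativity of the two diagrams out of which the composite was built. I expect no real obstacle here: the corollary is a formal consequence of Lemma~\ref{L:good maps and retractions} and Proposition~\ref{P:ses embedding}, amounting to explicitly assembling $\Delta_f$ as $\overline{D}_*$ followed by the embedding of Proposition~\ref{P:ses embedding}.
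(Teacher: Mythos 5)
Your proposal is correct and matches the paper's approach exactly: the paper defines $\Delta_f$ immediately before the corollary as the composition of $\overline{D}_*$ with the embedding of Proposition~\ref{P:ses embedding}, and the corollary is then the formal assembly you describe, using injectivity of $\overline{D}_*$ from Lemma~\ref{L:good maps and retractions} and the compatibility of $\overline{D}_*$ with the semidirect product structures noted just before Proposition~\ref{P:ses embedding}. The paper in fact offers no separate proof for the corollary precisely because it is this two-step composition, which you have laid out faithfully.
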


If $f^2(z) = z$, we can pass to a $2$--fold cover $M_{f^2} \to M_f$, and the issues with basepoints in the proof of Proposition~\ref{P:ses embedding} disappear.  Write $\tau$ for the loop in $M_{f^2}$ based at $z$ in $S \subset M_{f^2}$  representing the stable letter, as in Section \ref{S:braids in tori}.  Then $\tau$ acts like $f^2_*$ on $\pi_1(S,z)$, and restricting $\Delta_f$ to $\pi_1(M_{f^2})$ we get
\[
	\Delta_f \colon \pi_1(M_{f^2},z) \cong \pi_1(S,z) \rtimes \langle \tau \rangle \to \Mod(S_{\{z,f(z)\}},\{z,f(z)\}),
\]
where $\tau$ is sent to $f^2_{\{z,f(z)\}}$.  We note that $\Delta_f$ on $\pi_1(M_f)$ is type--preserving if and only if this restriction is, so it will suffice to work with this restriction.

\section{Figure eight}

We now focus on the figure--eight knot group.

\subsection{A linear map} \label{S:setup}

Let $L\colon \RR^2 \to \RR^2$ be the linear transformation given by
\[
L=
\left[
    \begin{matrix}
    2   &   1   \\
    1   &   1   \\
    \end{matrix}
\right]
\]
and let $f$ be the induced self-map of $T^2 = \RR^2/\ZZ^2$.  The restriction
\[
	f_\mathbf{0} \colon T^2_\mathbf{0} \to T^2_\mathbf{0}
\]
is the monodromy of the fibration of the figure--eight knot complement.

\begin{lemma}\label{lemma.fFPF} The homeomorphism $f$ fixes exactly one point, $\mathbf{0}$, and its square fixes exactly five points, namely
 $\displaystyle{ \mathbf{0}, \, \, z = \colvec{.2\\.4}, \, \, w = fz = \colvec{.8\\.6}, \, \, \colvec{.4\\.8}}$, and $\colvec{.6\\.2}$.
\end{lemma}

\begin{proof}
Since the action of $f_*$ on $H_1(T^2)$ is given by the matrix $L$, which has trace $3$, the Lefschetz number of $f$ is $-1$.  The local Lefschetz number of any fixed point is $-1$ (since the derivative of $f$ is also given by $L$), and consequently, $f$ has exactly one fixed point, which is necessarily $\mathbf{0}$.  The square has Lefschetz number $-5$, and by the same reasoning, there are exactly five fixed points.  A calculation shows that the five points listed are indeed fixed by $f^2$. 
\end{proof}

\begin{lemma}\label{lemma.ffillyfilly} For every essential loop $\beta$ in $T^2_\mathbf{0}$, 
the homotopy classes of $\beta$ and $f_\mathbf{0} \beta$ are distinct and their union is filling.
\end{lemma}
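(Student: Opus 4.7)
The plan is to prove the filling claim and the distinctness claim separately, both relying on the Anosov-ness of $L$ (with irrational eigenvalues $(3 \pm \sqrt{5})/2$). The filling claim follows the sketch in the introduction; the distinctness claim uses that $f$ is pseudo-Anosov on $T^2_\mathbf{0}$.

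For the filling claim, I first handle the case when $\beta$ is an essential non-peripheral simple closed curve. Such a $\beta$ represents a primitive nonzero class $[\beta] \in H_1(T^2;\ZZ) \cong \ZZ^2$, and since $L$ has no rational eigenvector, $[\beta]$ and $L[\beta] = [f\beta]$ are linearly independent; their algebraic intersection is nonzero, so $\beta$ and $f\beta$ meet positively on $T^2$ and fill it, each complementary region being a disk. Puncturing at $\mathbf{0}$, each complementary disk remains a disk or becomes a once-punctured disk, so $\beta \cup f\beta$ fills $T^2_\mathbf{0}$; the analogous argument with $L^{-1}$ gives the corresponding result for $f^{-1}$. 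For a general essential non-peripheral loop $\beta$, if $\beta \cup f\beta$ did not fill $T^2_\mathbf{0}$, there would be an essential non-peripheral simple closed curve $\alpha$ in its complement (up to isotopy). Applying $f^{-1}$, the curve $f^{-1}\alpha$ is disjoint from $\beta$, so $\beta$ is an essential non-peripheral loop in the complement of $\alpha \cup f^{-1}\alpha$, contradicting the simple-closed-curve case applied to $\alpha$ and $f^{-1}$.

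For the distinctness claim, I use that $f = \overline{L}_\mathbf{0}$ is pseudo-Anosov on $T^2_\mathbf{0}$ with dilatation $\lambda = (3+\sqrt{5})/2 > 1$, its invariant foliations being the restrictions of the irrational eigenfoliations of $L$ on $T^2$. Let $\mu^s$ denote the stable measured foliation, so that $f_* \mu^s = \lambda^{-1}\mu^s$. For any essential non-peripheral loop $\beta$, the intersection $i(\beta,\mu^s)$ is strictly positive: $\mu^s$ has no closed leaves, so the only loops realizable with arbitrarily small transverse measure are those that can be pushed into a neighborhood of the puncture, i.e., peripheral. If $\beta$ were freely homotopic to $f\beta$, then $i(\beta,\mu^s) = i(f\beta, \mu^s) = \lambda \cdot i(\beta,\mu^s)$ by the naturality of intersection numbers, forcing $i(\beta,\mu^s) = 0$, a contradiction. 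The main obstacle is the general-loop reduction in the filling step, where we must leverage the simple-closed-curve case via a careful choice of $\alpha$ in the complement; the distinctness then follows cleanly from pseudo-Anosov dynamics.
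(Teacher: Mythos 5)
Your proof is correct and follows the same strategy as the paper's: first establish filling when $\beta$ is a simple closed curve, then reduce the general case to the simple one via $f^{-1}$ (if $\alpha$ is disjoint from $\beta \cup f\beta$, then $\beta$ is disjoint from $\alpha \cup f^{-1}\alpha$, which fill), with distinctness coming from the pseudo-Anosov dynamics of $f$. You fill in more detail where the paper simply cites standard facts---the homology/algebraic-intersection argument where the paper invokes that two distinct essential simple closed curves on the once-punctured torus fill, and the stable-foliation scaling argument $i(f\beta,\mu^s)=\lambda\, i(\beta,\mu^s)$ where the paper invokes that a pseudo-Anosov fixes no essential curve class---and you correctly read ``essential'' as ``essential and nonperipheral,'' which is the intended meaning.
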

\begin{proof} 
	Let $\beta$ be an essential loop in $T^2_\mathbf{0}$.  Since $f_\mathbf{0}$ is pseudo-Anosov, it cannot fix the homotopy class of an essential curve, and $\beta$ and $f_\mathbf{0} \beta$ are not homotopic.
	
	If $\beta$ is a simple closed curve, then $f_\mathbf{0}(\beta)$ and $\beta$ are a pair of distinct essential simple closed curves on $T^2_\mathbf{0}$, hence their union fills.
	
	If $\beta$ is \textit{not} a simple closed curve, and $\beta$ and $f_\mathbf{0}\beta$ don't fill, then there is an essential simple closed curve $\gamma$ in the complement of $\beta \cup f_\mathbf{0}\beta$. Applying $f_\mathbf{0}^{-1}$, we see that $f_\mathbf{0}^{-1}\gamma$ and $\beta$ are also disjoint. This means that $\beta$ is disjoint from both $\gamma$ and $f_\mathbf{0}^{-1}(\gamma)$. By the simple closed curve case, we know that $\gamma$ and $f_\mathbf{0}^{-1}(\gamma)$ fill, and so $\beta$ must be inessential, a contradiction.
\end{proof}

\subsection{Notation, conventions, and the main theorem} \label{S:main theorem statement}

Let $X = \{\mathbf{0},z,w\}$ be the first three fixed points of $f^2$ listed in Lemma~\ref{lemma.fFPF}.  We continue to use the notation where, for $Z \subset X$, the map $f^2_Z \colon T^2_Z \to T^2_Z$ is the restriction of $f^2$ to $T^2_Z = T^2-Z$.

The homeomorphism $f_\mathbf{0}$ is fixed--point-free and its square $f_\mathbf{0}^2$ fixes $z$  and $w$, by  Lemma~\ref{lemma.fFPF}.  We let $\Gamma = \pi_1(M_{f_\mathbf{0}^2},z)$ and let $K = \pi_1(T^2_\mathbf{0},z)$ be the fiber subgroup, which is free of rank two. 
We also write $\Gamma$ as $\Gamma =  K \rtimes \langle \tau \rangle$ so that $\tau$ in $\Gamma$ is the stable letter as described in Section \ref{S:braids in tori}.  
Corollary \ref{C:extended representation} now gives us a representation
\[
	\Delta_f \colon \Gamma \to \PMod(T^2_X).
\]
(Technically, this should be denoted $\Delta_{f_\mathbf{0}}$, but we have simplified the notation to avoid further clutter.) With this set up we have $\Delta_f(\tau) = f^2_X$, as described after Corollary~\ref{C:extended representation}.
Given an element $\beta$ of $K$, we may write $\Delta_f(\beta) = (\beta,f_\mathbf{0}\beta)$ when convenient, (harmlessly) blurring the distinction between a loop in $\Conf_2(T^2_\mathbf{0})$) and its corresponding mapping class.

By Lemmas \ref{lemma.KraTWO.electric.bugaloo} and \ref{lemma.ffillyfilly}, the representation $\Delta_f$ sends nonperipheral loops in $K$ to pseudo-Anosov elements of $\PMod(T^2_X)$.  
Our main theorem states that this is also the case for all of $\Gamma$, yielding the precise version of Theorem~\ref{T:type preserving fig 8}.

\begin{theorem}\label{theorem.type} The representation $\Delta_f$ is type--preserving. In other words, the mapping class $\Delta_f (\gamma)$ is reducible if and only if $\gamma$ is a peripheral element of $\Gamma$.
\end{theorem}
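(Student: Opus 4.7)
The plan is to establish the nontrivial direction: if $\Delta_f(\gamma)$ is reducible, then $\gamma$ is peripheral in $\Gamma$. The converse is routine, as a peripheral element of $\Gamma$ lies in a conjugate of the cusp subgroup, which contains the meridian $m \in K$ around $\mathbf{0}$; its image in $\PMod(T^2_X)$ preserves a small meridional loop and is therefore reducible. Writing $\gamma = \beta \tau^n$ with $\beta \in K$ and $n \in \ZZ$, the case $n = 0$ follows immediately from Lemmas~\ref{lemma.KraTWO.electric.bugaloo} and~\ref{lemma.ffillyfilly}: if $\gamma$ is essential in $K$ then $\Delta_f(\gamma)$ is pseudo-Anosov, contradicting reducibility; otherwise $\gamma$ is a power of the meridian $m$, hence peripheral in $\Gamma$.

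Suppose now $n \neq 0$, and let $\mathcal{C}$ be a reducing system for $\Delta_f(\gamma)$ on $T^2_X$. Applying $\rho_X \colon \PMod(T^2_X) \to \PMod(T^2)$ sends $\gamma$ to $F^n$, which is Anosov on the closed torus and so fixes no finite set of isotopy classes of essential simple closed curves. Every $c \in \mathcal{C}$ is therefore trivial in $T^2$, bounding a disk $D_c \subset T^2$; and since reducing curves are neither trivial nor peripheral in $T^2_X$, $D_c$ contains at least two of $\{\mathbf{0}, z, w\}$. Fix such a $c$ and a puncture $y \in X$ lying in the exterior of $D_c$, or any $y \in X$ if $D_c$ contains all three. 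Then $c$ remains essential and nonperipheral in $T^2_{X \setminus \{y\}}$, so $\rho_y(\Delta_f(\gamma))$ is reducible in $\PMod(T^2_{X \setminus \{y\}})$.

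When $y \in \{z, w\}$, writing $p$ for the other non-$\mathbf{0}$ puncture, $\rho_y \circ \Delta_f$ coincides with the standard Birman embedding
\[
	\Gamma = K \rtimes \langle \tau \rangle \;\hookrightarrow\; \PMod\bigl(T^2_{\{\mathbf{0}, p\}}\bigr)
\]
of the form~(\ref{Eq:embedding SES}), which realizes $\Gamma$ as the preimage of $\langle f^2 \rangle \le \PMod(T^2_\mathbf{0})$. Type-preservation of such Birman embeddings from the hyperbolic fibered $3$-manifold $M_{f^2}$ is a standard consequence of Kra's theorem~\cite{Kra} for the fiber subgroup $K$ combined with the atoroidality of $M_{f^2}$, so reducibility of the image forces $\gamma$ to lie in a cusp subgroup of $\Gamma$, i.e., to be peripheral.

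The case $y = \mathbf{0}$ is the main technical obstacle, since $\rho_\mathbf{0} \circ \Delta_f \colon \Gamma \to \PMod(T^2_{\{z, w\}})$ does not manifestly arise from a Birman sequence. To handle it, the plan is to exploit the abelian group structure on $T^2$: subtracting the coordinates of a two-strand pure braid $(x(t), y(t))$ in $\Conf_2(T^2)$ produces an isomorphism $\PB_2(T^2) \cong \pi_1(T^2) \times \pi_1(T^2 \setminus \{z\}, w)$, under which $\rho_\mathbf{0} \circ \Delta_f$ factors through a map into the image of the usual Birman representation in $\PMod(T^2_{\{z, w\}})$. An explicit computation with $L$---crucially using that $\det(L - I) = -1$ makes $L - I$ invertible on $\ZZ^2$---then shows this factored map is injective, hence an isomorphism of $\Gamma$ onto its Birman image. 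Mostow--Prasad rigidity then guarantees that this abstract isomorphism of finite-volume hyperbolic $3$-manifold groups is type-preserving, and so $\gamma$ is again peripheral, completing the proof.
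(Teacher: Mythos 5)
Your proposal tracks the paper's strategy closely: reduce to elements with nonzero $\tau$--exponent, observe that the reducing curve must bound a disk in $T^2$ containing at least two punctures, forget the excluded puncture to land in a Birman-type setting, and---for the hard case $y=\mathbf{0}$---use the additive structure on $T^2$ and the invertibility of $L-I$ over $\ZZ$ to show that $\rho_\mathbf{0}\circ\Delta_f$ is an isomorphism onto the Birman image $\Gamma_{\{z,w\}}$, then invoke Mostow--Prasad rigidity to transport peripherality. This is precisely the role of Lemma~\ref{L:any 1 of 3} in the paper, and your identification of $\det(L-I)=-1$ as the crucial arithmetic fact is correct.

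However, there is a genuine gap at the step you label as ``standard.'' You claim that ``type-preservation of such Birman embeddings from the hyperbolic fibered $3$--manifold $M_{f^2}$ is a standard consequence of Kra's theorem combined with the atoroidality of $M_{f^2}$.'' As stated, this is false: the Birman embedding $\Gamma \hookrightarrow \rho_{\mathbf{0}}^{-1}\langle f^2\rangle < \PMod(T^2_{\{\mathbf{0},z\}})$ is \emph{not} type-preserving, since Kra's criterion says the image of $\gamma \in K$ is reducible whenever $\gamma$ is non-filling, and $T^2_{\mathbf{0}}$ carries plenty of non-filling, nonperipheral loops (e.g.\ powers of simple closed curves) that are loxodromic in $\Gamma$. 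The restricted statement you actually need---an element of $\Gamma_Y$ with nonzero fibration class that is reducible in $\PMod(T^2_Y)$ must be peripheral in $\Gamma_Y$---is true, but it is not a citable standard fact; it is the technical heart of the theorem. The paper proves it via a $3$--manifold argument: the reducing curve gives an essential torus $\calT$ in the mapping torus, which (after pushing into the Sol-manifold $M_{F^m}$ and using that $F^m$ preserves no essential curve on $T^2$) bounds a solid torus $V$; two of the three link components are forced to be parallel cores of $V$; and then the knot $\mathcal{K}$ representing the element of $\Gamma_Y$ lies in $V-\ell_y$, hence is homotopic into the $y$--cusp and so peripheral. Your $y=\mathbf{0}$ branch also secretly relies on this same unproven claim: Mostow rigidity converts peripherality in $\Gamma_Y$ to peripherality in $\Gamma$, but you still need the geometric argument to get from ``$\rho_\mathbf{0}\Delta_f(\gamma)$ is reducible in $\PMod(T^2_{\{z,w\}})$'' to ``$\rho_\mathbf{0}\Delta_f(\gamma)$ is peripheral in $\Gamma_{\{z,w\}}$.'' To complete the proof you should supply the solid-torus argument (or an equivalent essential-torus argument in $M_{F^m_y}-\mathcal K$ using atoroidality of $M_{F^m_y}$) rather than deferring to a nonexistent reference. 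A minor additional omission: after fixing a reducing system $\calC$ you should pass to a power of $\gamma$ so that the chosen curve $c\in\calC$ is itself preserved, as $\Delta_f(\gamma)$ may permute the components of $\calC$.
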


We will make use of the additive group structure on $T^2 = \RR^2/\ZZ^2$ with identity $\mathbf{0}$.  
Given $x$ in $T^2$, we let $\mu_x \colon T^2 \to T^2$ be the translation $\mu_x(y) = y+x$, whose inverse is
$\mu_x^{-1}(y) = \mu_{-x}(y) = y-x$.   
Observe that for each fixed point $x$ of $f^2$, we have $f^2_x = \mu_x f^2_\mathbf 0 \mu_x^{-1}$, since $f$ is linear. 
In particular, $\mu_x$ determines a canonical homeomorphism
\[
	\widehat \mu_x \colon M_{f^2_\mathbf{0}} \to M_{f^2_x}.
\]
Explicitly, $M_{f^2_x}$ is an open submanifold of $M_{f^2}$ obtained by deleting the image of $\{x\} \times [0,1] \subset T^2 \times [0,1]$ in the quotient $M_{f^2}$, and $\widehat \mu_x$ is the descent of the homeomorphism $\mu_x \times \mathds{1}_{T^2} \colon T^2 \times [0,1] \to T^2 \times [0,1]$ to $M_{f^2}$ restricted to $M_{f^2_\mathbf{0}}$ on the domain and $M_{f^2_x}$ on the range.

\subsection{Forgetting punctures}\label{Section.forgetting.punctures}

For each $x$ in $X = \{\mathbf{0},z,w\}$, we consider the Birman Exact Sequence
\[
    1 \to \pi_1\big(T^2_{X - \{x\}}, x\big) \to \PMod\big(T^2_X\big) \stackrel{\rho_x}{\longrightarrow} \PMod\big(T^2_{X - \{x\}}\big) \to 1,
\]
where $\rho_x$ forgets $x$.

We also consider two element subsets $Y = \{x,y\} \subset X$, and the Birman Exact Sequence
\[
    1 \to \pi_1\big(T^2_y,x\big) \to \PMod\big(T^2_Y \big) \stackrel{\pi_x}{\longrightarrow} \Mod\big(T^2_y \big) \to 1
\]
where we write $\pi_x$ for the homomorphism that forgets $x$ to distinguish it from the map in the previous sequence.  The domain of $\pi_x$ also depends on the choice of two point set $Y$ containing $x$, which we will make clear in context.  Of course, we can interchange the roles of $x$ and $y$.

For $Y = \{x,y\} \subset X$ a two point set as above, there is an associated isomorphism 
\[
	\eta_{x,y} \colon  \pi_1\big(M_{f^2_x},y\big) \to \pi_y^{-1} \langle f^2_x \rangle < \PMod\big(T^2_Y\big)
\]
coming from the embedding of short exact sequences in \eqref{Eq:embedding SES}.

The stable letter in the semidirect product $\pi_1(M_{f^2_x},y)$ is mapped to $f^2_Y$ by this isomorphism.  Since $M_{f^2_x} \cong M_{f^2_\mathbf{0}}$, we can view the domain of $\eta_{x,y}$ as $\Gamma$, when convenient, after choosing an appropriate basepoint--change isomorphism.  
The next lemma tells us that the image depends only on $Y$.

\begin{lemma}
    For $Y = \{x,y\} \subset X$, we have $\pi_y^{-1} \langle f^2_x \rangle = \pi_x^{-1} \langle f^2_y \rangle$. 
\end{lemma}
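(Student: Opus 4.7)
The plan is to identify both $\pi_y^{-1}\langle F_x \rangle$ and $\pi_x^{-1}\langle F_y \rangle$ with the preimage of $\langle F \rangle \subset \Mod(T^2)$ under the homomorphism $\bar\rho \colon \PMod(T^2_Y) \to \Mod(T^2)$ obtained by filling in both punctures. Since that preimage is a single subgroup, the desired equality will follow formally.

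First I would assemble the Birman forgetful maps into the commuting square
\[
\begin{tikzcd}[row sep=.5cm]
\PMod(T^2_Y) \arrow[r, "\pi_y"] \arrow[d, "\pi_x"'] & \Mod(T^2_x) \arrow[d] \\
\Mod(T^2_y) \arrow[r] & \Mod(T^2),
\end{tikzcd}
\]
whose two unlabeled arrows are the ``fill in the remaining puncture'' maps and whose two composite arrows both equal $\bar\rho$; commutativity just records that the order in which one forgets $x$ and $y$ does not matter at the level of homeomorphisms of $T^2$.

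The heart of the argument is the torus-specific fact that the two unlabeled maps $\Mod(T^2_x) \to \Mod(T^2)$ and $\Mod(T^2_y) \to \Mod(T^2)$ are \emph{isomorphisms}. This is exactly where the hypothesis $\chi(S) < 0$ of the Birman Exact Sequence is used: for the torus, the would-be point-pushing kernel $\pi_1(T^2,x) = \ZZ^2$ is trivial in $\Mod(T^2_x)$, because the point-push along any loop in $T^2$ based at $x$ can be realized by a lattice translation $\mu_v$ with $v \in \ZZ^2$, and such a $\mu_v$ is simply the identity on $T^2 = \RR^2/\ZZ^2$. Equivalently, both $\Mod(T^2_x)$ and $\Mod(T^2)$ are canonically identified with $\SL_2(\ZZ)$ via their actions on $H_1(T^2;\ZZ)$, and the forgetful map is the identity under these identifications. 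I expect this to be the only step requiring real justification.

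Granting this, the lemma is a short chase. Since the isomorphism $\Mod(T^2_x) \to \Mod(T^2)$ sends $F_x$ to $F$, it pulls $\langle F \rangle$ back bijectively to $\langle F_x \rangle$, and hence
\[
\pi_y^{-1}\langle F_x \rangle \;=\; \bar\rho^{-1}\langle F \rangle.
\]
The symmetric calculation, using that $\Mod(T^2_y) \to \Mod(T^2)$ is also an isomorphism sending $F_y$ to $F$, gives $\pi_x^{-1}\langle F_y \rangle = \bar\rho^{-1}\langle F \rangle$, so the two subgroups coincide.
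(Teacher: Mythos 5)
Your proof is correct and uses the same key fact as the paper's: the torus-specific isomorphism $\Mod(T^2_x) \cong \Mod(T^2)$ obtained by forgetting the remaining puncture (which the paper cites without explanation and you justify via the triviality of point-pushing on the torus). The only difference is organizational: the paper identifies the two Birman kernels $\pi_1(T^2_x,y)$ and $\pi_1(T^2_y,x)$ inside $\PMod(T^2_Y)$ and writes each preimage as $\langle \ker, F_Y\rangle$, whereas you pass directly through $\bar\rho^{-1}\langle F\rangle$; this is a minor streamlining of the same argument.
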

\begin{proof}
Observe that $\pi_x(f^2_Y) = f^2_y$ and $\pi_y(f^2_Y) = f^2_x$.  Furthermore, the inclusions of $\pi_1(T^2_x,y)$ and $\pi_1(T^2_y,x)$ into $\PMod(T^2_Y)$ via the Birman Exact Sequence have the same image: they are both equal to the kernel of the forgetful map $\PMod(T^2_Y) \to \Mod(T^2)$ since $\Mod(T^2_x) \cong \Mod(T^2)$ with the isomorphism obtained by forgetting the puncture $x$. Therefore, we have
\[
	\pi_y^{-1}\langle f^2_x \rangle = \big\langle \pi_1\big(T^2_x,y \big),f^2_Y \big\rangle = \big\langle \pi_1\big(T^2_y,x\big),f^2_Y \big\rangle = \pi_x^{-1} \langle f^2_y \rangle. \qedhere
\]
\end{proof}
\noindent For any two element subset $Y = \{x,y\} \subset X$, we write $\Gamma_Y = \pi_y^{-1} \langle f^2_x \rangle = \pi_x^{-1}\langle f^2_y\rangle$.  
We also write $K_Y \triangleleft \Gamma_Y$ for the fiber subgroup (which, as the proof shows, is independent of the forgotten point).

We now come to the key lemma.
\begin{lemma} \label{L:any 1 of 3}
    For each $x$ in $X = \{\mathbf{0}, z, w \}$, the composition $\rho_x \circ \Delta_f$ is an isomorphism onto $\Gamma_{X -\{x\}}$, sending $K$ to the fiber subgroup $K_{X-\{x\}}$.
\end{lemma}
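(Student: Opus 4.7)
The plan is to split by cases on which puncture $x \in X = \{\mathbf{0}, z, w\}$ we forget. When $x$ is a fixed point of $F$ (that is, $x = z$ or $x = w$), the composition $\rho_x \circ \Delta_f$ essentially reproduces the isomorphism $\eta$ from the embedding of short exact sequences in Proposition \ref{P:ses embedding}; the case $x = \mathbf{0}$ is the genuine work. For $x = w = fz$, I would observe that the pure braid $\Delta_f(\beta) = (\beta, f\beta)$ collapses, upon filling in the puncture $w$, to the one-strand braid $\beta$ based at $z$, giving the standard Birman embedding $K \hookrightarrow \PMod(T^2_{\{\mathbf{0},z\}})$; together with $\rho_w(F_X) = F_{\{\mathbf{0}, z\}}$, this matches $\eta_{\mathbf{0}, z}$ and is an isomorphism onto $\Gamma_{\{\mathbf{0}, z\}}$ carrying $K$ to $K_{\{\mathbf{0}, z\}}$. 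The case $x = z$ is analogous, with $\beta$ replaced by $f\beta$ based at $w$; since $f$ is a homeomorphism, $f_*\colon \pi_1(T^2_\mathbf{0}, z) \to \pi_1(T^2_\mathbf{0}, w)$ is an isomorphism, yielding the required isomorphism onto $\Gamma_{\{\mathbf{0},w\}}$.

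For $x = \mathbf{0}$, the image lies in $\Gamma_{\{z, w\}}$ by commutativity of forgetful maps: $\pi_z \circ \rho_\mathbf{0} \circ \Delta_f = \pi_\mathbf{0} \circ \rho_z \circ \Delta_f$ lands in $\langle F_w\rangle$ by the preceding case. To analyze $\rho_\mathbf{0} \circ \Delta_f$ on $K$, I would use the additive group structure on $T^2$ to level-isotope the pure braid $(\beta, f\beta)$, viewed in $\PB_2(T^2)$ after filling in $\mathbf{0}$, to a one-strand braid. Specifically, the homotopy
\[
H_s(t) = \bigl(\beta(t) + s(z-\beta(t)),\, f\beta(t) + s(z-\beta(t))\bigr)
\]
remains in $\Conf_2(T^2)$ since the two coordinates differ by $f\beta(t) - \beta(t)$, which is never $\mathbf{0}$ by the fixed-point-freeness of $f$, and it deforms $(\beta, f\beta)$ to the loop $(z, \gamma)$ where $\gamma(t) = \overline{L}(\beta(t)) - \beta(t) + z$ is a loop in $T^2 \setminus \{z\}$ based at $w$. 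Thus $\rho_\mathbf{0} \circ \Delta_f|_K$ is induced by the self-map $\Psi \colon T^2 \to T^2$ defined by $\Psi(x) = \overline{L}(x) - x + z$.

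The miraculous computation is that $\Psi$ has constant Jacobian $L - I = \bigl[\begin{smallmatrix} 1 & 1 \\ 1 & 0 \end{smallmatrix}\bigr]$ with $\det = -1$, so $\Psi$ is a self-diffeomorphism of $T^2$ sending $\mathbf{0} \to z$ and $z \to w$. Its restriction $T^2_\mathbf{0} \to T^2_z$ induces an isomorphism $\Psi_*\colon K \to \pi_1(T^2_z, w) = K_{\{z,w\}}$. The linearity of $\overline{L}$ together with $\overline{L}^2(z) = z$ gives the intertwining identity $\Psi \circ \overline{L}^2 = \overline{L}^2 \circ \Psi$, so $\Psi_*$ commutes with the monodromy actions of $\tau$ and $F_{\{z,w\}}$; the five lemma then promotes the isomorphisms on fiber and quotient to an isomorphism $\Gamma \to \Gamma_{\{z,w\}}$. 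The main obstacle is verifying carefully that the level isotopy $H_s$ translates into the mapping class $\Psi_*(\beta)$ under the Birman correspondence; once that is in hand, the elegant determinant computation does the rest.
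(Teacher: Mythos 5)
Your proposal follows essentially the same approach as the paper: the cases $x = z, w$ are disposed of by noting that forgetting one of the $\Delta_f$--punctures recovers a conjugate of the standard Birman embedding, while the case $x = \mathbf{0}$ is attacked via the translation trick exploiting the additive group structure of $T^2$. Your observation that the resulting map $K \to K_{\{z,w\}}$ is induced by the self--map $\Psi(x) = \overline{L}(x) - x + z$, and that $\Psi$ is a self--diffeomorphism of $T^2$ since its linear part $L - I$ has determinant $-1$, is a genuine streamlining: the paper instead computes the images of the generating loops $A(t) = z + [t,0]^T$ and $B(t) = z + [0,t]^T$ explicitly and observes by inspection that they form a basis of $K_{\{z,w\}}$. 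Your determinant computation gives the same conclusion more conceptually, and the intertwining $\Psi \circ \overline{L}^2 = \overline{L}^2 \circ \Psi$ (valid because $\overline{L}^2 z = z$) cleanly handles the semidirect product structure.

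The obstacle you flag at the end, however, is precisely the content the paper supplies, and the framing you propose is slightly delicate. Your level isotopy $H_s$ lives in $\Conf_2(T^2)$ and you speak of working ``in $\PB_2(T^2)$''; but since $\chi(T^2) = 0$, the Birman exact sequence is not available for $T^2$ itself, so a homotopy of loops in $\Conf_2(T^2)$ does not directly translate into a statement about mapping classes. The paper sidesteps this by working with isotopies of homeomorphisms of $T^2$: taking $h^t$ with $h^0$ a representative of $\Delta_f(\beta)$, $h^1 = \mathds{1}_{T^2}$, and setting $\psi^t = h^t - h^t(z) + z$. Since $\psi^t(z) = z$ for all $t$, the restrictions $\psi^t_z$ give an isotopy from $h_z$ to $\mathds{1}_{T^2_z}$, which simultaneously shows $h_{\{z,w\}} \in \ker(\pi_w)$ and, via the Birman sequence for $T^2_z$ (which does have negative Euler characteristic), identifies the image with the loop $t \mapsto \psi^t_z(w) = f\beta(t) - \beta(t) + z = \Psi(\beta(t))$. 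Your $H_s$ is exactly the trace of $\{z,w\}$ under this isotopy, so the idea is right; the bookkeeping just needs to be done at the level of homeomorphisms of $T^2$ rather than inside a braid group on the closed torus.
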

We note that although the proof below shows directly that the isomorphism sends $K$ to $K_{X-\{x\}}$ in each case, this actually follows immediately since $\Gamma$ (and hence each $\Gamma_Y$) has a unique homomorphism to $\ZZ$, since $f^2$ is Anosov.
\begin{proof}[Proof of Lemma \ref{L:any 1 of 3}] 

	First consider the case $x = w$ and let $Y = \{\mathbf{0},z\}$ with associated forgetful homomorphism $\pi_z$.  Then $\rho_w \circ \Delta_f(\tau) = \rho_w(f^2_X) = f^2_Y$, and for any $\gamma$ in $\pi_1(T^2_\mathbf{0},z)$, we have
\[
	\rho_w \circ \Delta_f(\gamma) = \rho_w(\gamma,f_\mathbf{0}\gamma) = \gamma, 
\]
viewing $(\gamma,f_\mathbf{0}\gamma)$ as an element of $\pi_1(\Conf(T^2_\mathbf{0}),\{z,w\}) < \PMod(T^2_X)$ and $\gamma$ as an element of $\pi_1(T^2_\mathbf{0},z) < \PMod(T^2_Y)$.  It follows that $\rho_w \circ \Delta_f$ is an isomorphism onto $\Gamma_Y$.  A similar argument holds for $x=z$ (where $Y = \{\mathbf{0},w\}$ with associated forgetful homomorphism $\pi_w$).  The only exception is that the displayed equation becomes
\[
	\rho_z \circ \Delta_f(\gamma) = \rho_z(\gamma,f_\mathbf{0}\gamma) = f_\mathbf{0}\gamma.
\]    

We now consider the case that $x=\mathbf{0}$ and let $Y = \{z,w\}$.  Let us first restrict our attention to the fiber subgroup $K$.

Let $\gamma$ be an element of $K$, and let $h \colon T^2 \to T^2$ be a homeomorphism so that $h_X \colon T^2_X \to T^2_X$ represents $\Delta_f(\gamma)$.  The mapping class $\rho_\mathbf{0} \circ \Delta_f(\gamma)$ is then represented by the homeomorphism 
    \[
        h_Y \colon T^2_Y \to T^2_Y.
    \]

Since $\Delta_f(K)$ is in the kernel of the homomorphism obtained by forgetting \textit{both} $z$ and $w$, the restriction $h_{\mathbf{0}} \colon T^2_\mathbf{0} \to T^2_\mathbf{0}$ is isotopic to the identity on $T^2_\mathbf{0}$.  We let $h^t \colon T^2 \to T^2$ be the extension of that isotopy to an isotopy from $h = h^0$ to the identity $h^1=\mathds{1}_{T^2}$.

\begin{remark} Here and in the equations below, it is helpful to keep in mind that subscripts denote punctured points and superscripts denote parameters for isotopies.
\end{remark}
    
    We define a new isotopy from $h$ to the identity by 
    \[
        \bar h^{\, t} = h^t - h^t(z) + z.
    \]
    For each $t$, the map $\bar h^{\, t}$ is a homeomorphism (namely, the homeomorphism $h^t$ composed with the translation $\mu_{z-h^t(z)}$) and so $\bar h^{\, t}$ does indeed define an isotopy from $\bar h^0$ to $\bar h^1$ on $T^2$.
    Now, since $h^0(z) = z = h^1(z)$, we have
    \begin{align*}
        \bar h^0  &   = h^0 - h^0(z) + z  = h^0
    \end{align*}
and
    \begin{align*}
        \bar h^1  &   = h^1 - h^1(z) + z  = h^1  = \mathds{1}_{T^2}
    \end{align*}
    Also note that, for all $t$, we have
    \begin{align*}
        \bar h^{\, t}(z)  &   = h^t(z) - h^t(z) + z = z.
    \end{align*}
    Since $\bar h^{\, t}$ fixes $z$ for all $t$, the isotopy $\bar h^{\, t}_z \colon T^2_z \to T^2_z$ is well--defined.

    Since $\bar h^{\, t}_z$ is an isotopy from $\bar h^0_z = h^0_z$ to $\bar h^1_z = h^1_z = \mathds{1}_{T^2_z}$, the homeomorphism 
    \[
        h_Y \colon T^2_Y \to T^2_Y
    \]
    representing $\rho_\mathbf{0} \circ\Delta_f(\gamma)$ lies in the kernel of the map 
    \[
        \pi_w \colon \PMod\big(T^2_Y\big) \to \Mod\big(T^2_z\big)
    \]    
    forgetting $w$.
    Moreover, this mapping class corresponds to the element of $\pi_1(T^2_z, w)$ (via the Birman sequence) represented by the loop 
    \begin{align*}
        \bar h^{\, t}_z(w)   &   = h^t(w) - h^t(z) + z     \\
                    &   = f_\mathbf{0}\gamma(t) - \gamma(t) + z.
    \end{align*}

    At this point, we have shown that, for every $\gamma$ in $K$, the mapping class $\rho_\mathbf{0} \circ \Delta_f(\gamma)$ corresponds to the element of $K_Y = \pi_1(T^2_z, w)$ represented by the loop $f_\mathbf{0}\gamma(t) - \gamma(t) + z$.  Now, consider the generators of $K = \pi_1(T^2_{\bf 0}, z)$ given by the loops $A(t) = z + \colvec{t\\0}$ and $B(t) = z + \colvec{0\\t}$.
    For these loops, we have
    \begin{align*}
        \rho_\mathbf{0} \circ \Delta_f(A)   & \approx f_\mathbf{0} \, A(t) - A(t) + z    \\
                            & = f_\mathbf{0} \, \left( z + 
                                        \left[  \begin{matrix}
                                                    1 \\ 0
                                                \end{matrix}
                                        \right] 
                                        t 
                                \right)
                                - \left[  \begin{matrix}
                                                    1 \\ 0
                                                \end{matrix}
                                        \right] 
                                        t 
                                -z
                                +z \\
                            & = \left[
                                    \begin{matrix}
                                        2   &   1   \\
                                        1   &   1   \\
                                    \end{matrix}
                                \right]
                                \left( z + 
                                        \left[  \begin{matrix}
                                                    1 \\ 0
                                                \end{matrix}
                                        \right] 
                                        t 
                                \right)
                                - \left[  \begin{matrix}
                                                    1 \\ 0
                                                \end{matrix}
                                        \right] 
                                        t 
                                \\
                            & = fz + \left[  \begin{matrix}
                                                    1 \\ 1
                                                \end{matrix}
                                        \right] t
                                \\
                            & = w + \left[  \begin{matrix}
                                                    1 \\ 1
                                                \end{matrix}
                                        \right] t
    \end{align*}

    and 
    \begin{align*}
        \rho_\mathbf{0} \circ \Delta_f(B)   & \approx f_\mathbf{0} \, B(t) - B(t) + z    \\
                            & = f_\mathbf{0} \, \left( z + 
                                        \left[  \begin{matrix}
                                                    0 \\ 1
                                                \end{matrix}
                                        \right] 
                                        t 
                                \right)
                                - \left[  \begin{matrix}
                                                    0 \\ 1
                                                \end{matrix}
                                        \right] 
                                        t 
                                -z
                                +z \\
                            & = \left[
                                    \begin{matrix}
                                        2   &   1   \\
                                        1   &   1   \\
                                    \end{matrix}
                                \right]
                                \left( z + 
                                        \left[  \begin{matrix}
                                                    0 \\ 1
                                                \end{matrix}
                                        \right] 
                                        t 
                                \right)
                                - \left[  \begin{matrix}
                                                    0 \\ 1
                                                \end{matrix}
                                        \right] 
                                        t 
                                \\
                            & = fz + \left[  \begin{matrix}
                                                    1 \\ 0
                                                \end{matrix}
                                        \right] t
                                 \\
                            & = w + \left[  \begin{matrix}
                                                    1 \\ 0
                                                \end{matrix}
                                        \right] t.
    \end{align*}
    These two loops form a basis for the free group $K_Y = \pi_1(T^2_z, w)$, and so $\rho_\mathbf{0} \circ \Delta_f|_K$ is a homomorphism taking a basis to a basis and therefore must be an isomorphism onto its image $K_Y$.

    The mapping class $\rho_{\mathbf{0}} \circ \Delta_f(\tau)$ is represented by the homeomorphism $f^2_Y$. 
    Therefore, $\rho_{\mathbf{0}}\circ \Delta_f$ restricts to a homomorphism 
\[
	\Gamma \cong K \rtimes \langle \tau \rangle \to K_Y \rtimes \langle f^2_Y \rangle \cong \Gamma_Y,
\]
taking $K$ isomorphically to $K_Y$ and $\tau$ to $f^2_Y$. 
    This implies that $\rho_{\mathbf{0}}\circ \Delta_f$ is an isomorphism from $\Gamma$ to $\Gamma_Y$, as required.
\end{proof}

\begin{proof}[Proof of the Theorem \ref{theorem.type}] We have already noted that for $\gamma$ in $K$, Lemmas \ref{lemma.KraTWO.electric.bugaloo} and \ref{lemma.ffillyfilly} imply that $\Delta_f(\gamma)$ is reducible if and only if $\gamma$ is peripheral.  Thus, we consider $\tau^m \gamma$ for some $m \neq 0$.  
Then $\Delta_f(\tau^m \gamma)$ is represented by a homeomorphism $f_X^{2m} h_X$ for $h_X$ in the kernel of the homomorphism $\pi_w \rho_z$ that forgets both  $z$ and $w$. 
Suppose $f_X^{2m}h_X$ is reducible. 
After passing to a power (and rewriting using the normal form for semidirect products), we may assume that $f_X^{2m} h_X$ fixes an essential simple closed curve $\alpha \subset T^2_X$. 
(In fact, raising to this power is unnecessary in our setting but this argument suffices.)
 The mapping torus $M_{f_X^{2m}h_X}$ then contains an essential torus $\calT$ meeting each $T^2_X$--fiber transversely in an essential simple closed curve.

We consider $M_{f^{2m}_Xh_X}$ an open submanifold of $M_{f^{2m}h}$.
By Lemma \ref{L:braids and links 2}, there is a $T^2$--fiber-preserving homeomorphism $M_{f^{2m}h} \cong M_{f^{2m}}$ and $M_{f^{2m}_Xh_X}$ is the complement of a link $\mathcal L \subset M_{f^{2m}}$ transverse to the $T^2$ fibers.  Moreover, $\mathcal L$ must have three components since each point of $X$ is fixed by $f^{2m}h$.  The homeomorphism sends $\calT$ to a torus meeting each fiber $T^2$ in a simple closed curve, and the torus defines a homotopy from one such curve to its $f^{2m}$--image.  Since $f^{2m}$ is Anosov, it preserves no homotopy class of essential simple closed curve on $T^2$, and hence each curve of intersection of $\calT$ with the fiber $T^2$ is null homotopic in $T^2$.  In particular, $\calT$ bounds a solid torus $V$ in $M_{f^{2m}}$.

Since $\calT$ is essential in $M_{f^{2m}_Xh_X} = M_{f^{2m}} - \mathcal L$, the solid torus $V$ must contain at least two components of $\mathcal L$, and each component must be a core of $V$.  Let $Y \subset X$ be the set of two points determining this two component link $\mathcal L_0 \subset \mathcal L$ contained in $V$, and let $x$ in $X - Y$ be the third point, whose corresponding component of $\mathcal L$ may or may not lie in $V$.  
Observe that $\rho_x\Delta_f(\tau^m\gamma)$ is represented by $f_Y^{2m}h_Y$ in $\Gamma_Y$, and $M_{f^{2m}_Yh_Y} = M_{f^{2m}} - \mathcal L_0$.  In particular, since $\mathcal L_0 \subset V$, it follows that the torus $\calT$ is still essential in $M_{f^{2m}_Yh_Y}$, and hence $f^{2m}_Yh_Y$ is reducible.

Now write $Y = \{y,u\}$ and note that $\rho_x \Delta_f(\tau^m \gamma)$ in $\Gamma_Y$ is represented by $f^{2m}_Yh_Y$. 
Forgetting $u$, we have that $h_y$ must in fact be isotopic to the identity in $T^2_y$, and we let $h^t_y$ be the isotopy from $h^0_y$ to the identity.  This traces out a loop $\beta(t) = h^t_y(u)$ that represents the element of $\pi_1(T^2_y,u)$ corresponding to mapping class $h_Y$.  By Lemma \ref{L:braids and links 2}, the mapping torus $M_{f^{2m}_Yh_Y}$ is homeomorphic to $M_{f^{2m}_y} - \mathcal K$, where $\mathcal K$ is the knot traced out in $T^2_y \times [0,1]$ by $(h^t_y(u),t)$ and projected to $M_{f^{2m}_y}$.  By Lemma~\ref{L:identifying loops}, the knot $\mathcal K$ is a loop based at $u$ representing $\tau^m \gamma$ which maps to $\rho_x \Delta_f(\tau^m\gamma)$ (up to conjugacy). 
Since $\mathcal K$ is contained in $V$, it follows that this loop is peripheral.  
Since peripheral elements are precisely those whose centralizers are isomorphic to $\ZZ^2$, the isomorphism from $\Gamma \to \Gamma_Y$ maps peripheral elements precisely to the peripheral elements, and hence $\tau^m\gamma$ is peripheral in $\Gamma$, as required.
\end{proof}

\begin{remark} 
We note that our notion of type--preserving here is not the most natural one from a metric perspective.  By Bers's proof \cite{Bers1978} of the Nielsen--Thurston classification theorem, reducible mapping classes are precisely those whose infimal translation length on Teichm\"uller space with respect to the Teichm\"uller metric is not realized as a minimum.  These further split into two subtypes: parabolic (those whose translation length is zero) and pseudo-hyperbolic (those with positive translation length).  A metrically natural notion of type--preserving might require that an element is parabolic if and only if its image is parabolic.
The homomorphism $\Delta_f$ sends every parabolic element to a reducible element, but while those in the fiber subgroup are sent to parabolic elements, those \textit{not} in the fiber subgroup are sent to pseudo-hyperbolic elements, and so $\Delta_f$ does not satisfy this stronger notion of type--preserving.
\end{remark}

\subsection{Surface subgroups}

We have the following consequence of Theorem~\ref{theorem.type}.
\begin{corollary}\label{corollary.surfaces} There are infinitely many commensurability classes of purely \linebreak pseudo-Anosov closed surface subgroups of $\Mod(T^2_X)$.
\end{corollary}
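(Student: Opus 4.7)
The plan is to apply the type--preserving injective representation $\Delta_f$ of Theorem~\ref{theorem.type} to an infinite family of closed quasifuchsian surface subgroups of $\Gamma = \pi_1(M_{f^2})$.

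Since $M_{f^2}$ is a double cover of the figure--eight knot complement $M_8$, the group $\Gamma$ is itself a noncocompact lattice in $\PSL_2(\CC)$. By the existence theorems of Masters--Zhang~\cite{MastersZhang2008} and Baker--Cooper~\cite{Baker_Cooper_2000}, together with the counting results of Cooper--Futer~\cite{CooperFuter} or Kahn--Wright~\cite{KahnWright2021}, $\pi_1(M_8)$ contains infinitely many commensurability classes of closed quasifuchsian surface subgroups. Intersecting each such subgroup with $\Gamma$ yields a finite--index subgroup that remains closed and quasifuchsian; since commensurability in $\Gamma$ would imply commensurability in $\pi_1(M_8)$, these intersections lie in pairwise distinct commensurability classes of $\Gamma$. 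We thus obtain an infinite sequence $\{H_i\}_{i \in \NN}$ of closed quasifuchsian surface subgroups of $\Gamma$, pairwise noncommensurable in $\Gamma$, each of which is purely hyperbolic and in particular contains no peripheral element of $\Gamma$.

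By Corollary~\ref{C:extended representation}, $\Delta_f$ is injective, and by Theorem~\ref{theorem.type}, it is type--preserving. Since $H_i$ is torsion-free and contains no peripheral element, every nontrivial element of $\Delta_f(H_i)$ is pseudo-Anosov, so $\Delta_f(H_i)$ is a purely pseudo-Anosov closed surface subgroup of $\PMod(T^2_X) \leq \Mod(T^2_X)$.

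The remaining step, which I expect to be the main obstacle, is to verify that the images $\Delta_f(H_i)$ fall into infinitely many commensurability classes in $\Mod(T^2_X)$. Commensurability in $\Mod(T^2_X)$ is a priori weaker than in $\Delta_f(\Gamma) \cong \Gamma$: an element $g \in \Mod(T^2_X) \setminus \Delta_f(\Gamma)$ could in principle commensurate $\Delta_f(H_i)$ with $\Delta_f(H_j)$ even when $H_i$ and $H_j$ are incommensurable in $\Gamma$. My plan is to rule out such wholesale collapse by combining Bowditch's finiteness theorem~\cite{Bowditch}---which gives finitely many conjugacy classes of purely pseudo-Anosov closed surface subgroups of $\Mod(T^2_X)$ at each bounded Euler characteristic---with a counting argument: within any single commensurability class in $\Mod(T^2_X)$, the representatives of Euler characteristic at least $\chi$ fall into only finitely many conjugacy classes, while the $H_i$ can be chosen of unbounded Euler characteristic using \cite{CooperFuter,KahnWright2021}. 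The hardest ingredient is controlling the commensurator of $\Delta_f(\Gamma)$ in $\Mod(T^2_X)$ in order to quantify this collapse and ensure that infinitely many commensurability classes in $\Gamma$ yield infinitely many in $\Mod(T^2_X)$.
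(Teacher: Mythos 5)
Your setup is fine: producing infinitely many $\Gamma$--incommensurable closed surface subgroups $H_i$ of $\Gamma = \pi_1(M_{f^2})$ (you use quasifuchsian surfaces from \cite{MastersZhang2008,Baker_Cooper_2000,CooperFuter,KahnWright2021}; the paper uses the totally geodesic surfaces of Maclachlan, but either works), and observing that the $\Delta_f(H_i)$ are purely pseudo-Anosov, is the correct first half of the argument. You also correctly identify that the substantive point is ruling out collapse of commensurability classes under elements of $\Mod(T^2_X)$ outside $\Delta_f(\Gamma)$.

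However, the Bowditch-plus-Euler-characteristic counting you sketch for that step does not work. Bowditch's theorem bounds the number of conjugacy classes of purely pseudo-Anosov surface subgroups of each bounded Euler characteristic, but a single commensurability class in $\Mod(T^2_X)$ already contains subgroups of every sufficiently negative Euler characteristic (pass to finite index). So choosing the $H_i$ with $|\chi(H_i)| \to \infty$ gives no contradiction: nothing prevents all the $\Delta_f(H_i)$, of wildly varying genus, from lying in a single commensurability class. You in fact acknowledge that ``controlling the commensurator of $\Delta_f(\Gamma)$'' is the hard part, and that is precisely where the proposal has a gap.

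The paper's actual mechanism is different and is the key innovation here. Suppose $g \in \PMod(T^2_X)$ conjugates (or commensurates) $\Delta_f(H_i)$ with $\Delta_f(H_j)$. Pick $\tau^m\gamma \in H_i$ with $m \neq 0$; applying the forgetful map $\rho_{\{z,w\}}$ that fills in both $z$ and $w$ shows $\rho_{\{z,w\}}(g)$ normalizes $\langle F_\mathbf{0} \rangle$ in $\Mod(T^2_\mathbf{0})$. Since $F_\mathbf{0}$ is pseudo-Anosov, that normalizer $N$ contains $\langle F_\mathbf{0} \rangle$ with finite index, so $g$ lies in $\mathcal N = \rho_{\{z,w\}}^{-1}(N)$, which contains $\mathcal G = \rho_{\{z,w\}}^{-1}(\langle F_\mathbf{0}\rangle)$ with finite index. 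A pigeonhole argument reduces to commensurating elements $g \in \mathcal G$. The decisive step is then the retraction $r\colon \mathcal G \to \Delta_f(\Gamma)$ supplied by Lemma~\ref{L:good maps and retractions}: $\mathcal G$ is the image of $\pi_1(M_{\hat f^2})$, and the composition $\overline{D}\circ\overline{\Pi}$ retracts $M_{\hat f^2}$ onto $\overline{D}(M_{f^2})$, giving a group-theoretic retraction fixing $\Delta_f(\Gamma)$ pointwise. Applying $r$ to a commensurating $g \in \mathcal G$ produces an element of $\Delta_f(\Gamma)$ that still commensurates $\Delta_f(H_i)$ with $\Delta_f(H_j)$, hence (since $\Delta_f$ is injective) an element of $\Gamma$ commensurating $H_i$ with $H_j$, contradicting the choice of the $H_i$. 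Without this retraction, or some substitute controlling the abstract commensurator of $\Delta_f(\Gamma)$ in $\Mod(T^2_X)$, the argument does not close.
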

\begin{proof}	The figure--eight knot complement contains infinitely many commensurability classes of totally geodesic closed immersed surfaces \cite{Maclachlan}, and the $\Delta_f$--image of these are purely pseudo-Anosov, by Theorem \ref{theorem.type}.  We claim that infinitely many distinct commensurability classes remain distinct commensurability classes in $\Mod(T^2_X)$.  

Suppose $G_1,G_2 < \Gamma$ are two closed surface subgroups such that $\Delta_f(G_1)$ and $\Delta_f(G_2)$ are conjugate by an element $g$ in $\PMod(T^2_X)$.  Since $K = \pi_1(T^2_{\mathbf{0}},z)$ is free, $G_1$ is not contained in $K$, and so there is an element $\tau^m \gamma$ in $G_1$ with $m >0$ and $\gamma$ in $K$.  Then
\[
	g \Delta_f(\tau^m \gamma) g^{-1} = \big(gf_X^{2m}g^{-1}\big) \big( g\Delta_f(\gamma)g^{-1}\big) 
\]
lies in $\Delta_f(G_2)$.
Applying $\rho_{\{z,w\}}$, we have
\begin{equation}\label{EQ.conjugate}
	\rho_{\{z,w\}}(g \Delta_f(\tau^m \gamma)g^{-1}) = \rho_{\{z,w\}}(g)f^{2m}_{\mathbf{0}} \rho_{\{z,w\}}(g)^{-1}.
\end{equation}
Since $g \Delta_f(\tau^m\gamma) g^{-1}$ is in $\Delta_f(G_2) < \Delta_f(\Gamma)$ and 
$\rho_{\{z,w\}} \colon \Delta_f(\Gamma) \to \Mod(T^2_\mathbf{0})$ has image $\langle f^2_\mathbf{0} \rangle$, the element \eqref{EQ.conjugate} above is in $\langle f^2_{\mathbf 0} \rangle$.  Consequently, $\rho_{\{z,w\}}(g)$ is in the normalizer $N$ of $\langle f^2_{\mathbf{0}} \rangle$.   
Since $f^2_{\mathbf{0}}$ is pseudo-Anosov, $N$ contains $\langle f^2_{\mathbf{0}} \rangle$ with finite index, and hence $g$ is in $\mathcal N = \rho_{\{z,w\}}^{-1}(N)$, which contains $\rho_{\{z,w\}}^{-1}(\langle f^2_\mathbf{0} \rangle)$ with finite index.  
We note that $\mathcal G = \rho_{\{z,w\}}^{-1}(\langle f^2_\mathbf{0} \rangle)$ is the image of $\pi_1(M_{\widehat{f}^{\, 2}},(z,f(z)) < \pi_1(M_{\widehat f},(z,f(z)))$ under the embedding in Proposition~\ref{P:ses embedding}.

Now, suppose there are infinitely many surface subgroups $G_1,G_2,\ldots < \Gamma$ that are pairwise nonconjugate in $\Gamma$, but whose $\Delta_f$--images are conjugate in $\PMod(T^2_X)$.  For each $i \geq 2$, let $g_i$ in $\PMod(T^2_X)$ be an element that conjugates $\Delta_f(G_1)$ to $\Delta_f(G_i)$.  After passing to a subsequence, we can assume that $g_i \mathcal G = g_j \mathcal G$ for all $i,j$, since $g_i$ and $g_j$ lie in   $\mathcal N$ and the index $[\mathcal N : \mathcal G]$ is finite.  But then $g_ig_2^{-1}$ conjugates $\Delta_f(G_2)$ to $\Delta_f(G_i)$, for all $i$, and $g_ig_2^{-1}$ lies in $\mathcal G$ for all $i$.  By Lemma~\ref{L:good maps and retractions}, there is a retraction $r \colon \mathcal G \to \Delta_f(\Gamma)$ induced by the retraction $M_{\widehat f^{\, 2}} \to M_{f^2}$.  It follows that $r(g_ig_2^{-1})$ also conjugates $\Delta_f(G_2)$ to $\Delta_f(G_i)$ for all $i$, but $r(g_ig_2^{-1})= \Delta_f(\gamma_i)$ for some $\gamma_i$ in $\Gamma$.  Since $\Delta_f$ is an isomorphism, $\gamma_i$ conjugates $G_2$ to $G_i$ for all $i$, contradicting the fact that the subgroups $G_i$ were all nonconjugate in $\Gamma$.

We conclude that there are infinitely many $\PMod(T^2_X)$--commensurability \linebreak classes of purely pseudo-Anosov surface subgroups, and since $\PMod(T^2_X)$ has finite index in $\Mod(T^2_X)$, there are also infinitely many $\Mod(T^2_X)$--commensurability classes of such subgroups.
\end{proof}

\section{Mapping class groups of closed surfaces}

Theorem \ref{theorem.ppAsurfaces} follows using a well--known branched covering trick following the ideas of J. Birman and H. Hilden---see \cite{BirmanHilden1971}, \cite{BirmanHilden1973}, \cite{BirmanHilden2017}, and also \cite{MaclachlanHarvey,Winarski,MargalitWinarski}. 
The precise fact we need is the following, which is an immediate consequence of \cite[Lemma 10]{AramayonaLeiningerSouto} (see also \cite[Section 6]{McMullenMukamelWright}).  
Let $\Teich(S)$ denote the Teichm\"uller space of a surface $S$ with its Teichm\"uller metric \cite{GardinerLakic}.  
A homomorphism between torsion--free subgroups of mapping class groups is \textit{type--preserving} if an element is pseudo-Anosov if and only if its image is. 

\begin{proposition}[{\cite[Lemma 10]{AramayonaLeiningerSouto}}]\label{P:branched cover trick}
Suppose $p \colon S \to T^2$ is a covering map branched over $X = \{\mathbf{0},z,w\}$ with local degree at each point of $p^{-1}(X)$ strictly greater than $1$. 
Then there is a finite index subgroup $\Mod_p(T^2_X) < \Mod(T^2_X)$, a type--preserving injective homomorphism $p^* \colon \Mod_p(T^2_X)  \to \Mod(S)$, and a $p^*$--equivariant isometric (and hence totally geodesic) embedding $\Teich(T^2_X) \to \Teich(S)$.  \qed
\end{proposition}

To apply Proposition~\ref{P:branched cover trick} and prove Theorem~\ref{theorem.ppAsurfaces}, we will also need the following.
\begin{proposition} \label{P:all at least 4}
Every closed surface $S$ of genus at least $4$ admits a covering $p \colon S \to T^2$ branched over $X = \{\mathbf{0},z,w\}$ so that the local degree at each point of $p^{-1}(X)$ is strictly greater than $1$.
\end{proposition}
\begin{proof} The {\em branching data} for a branched cover $p \colon S \to T^2$ branched over $X$ is the set $\mathcal P(p) = \{P_\mathbf{0},P_z,P_w\}$, where $P_x$ is the partition of $d$ describing the local degrees over $x$ in $X$.  
That is, for each $x$ in $X$, we have an $r_x$--tuple of nondecreasing positive integers $P_x = (P_x(1),\ldots,P_x(r_x))$, where $r_x = |p^{-1}(x)|$, where $P_x(j)$ is the local degree over $x$ at the $j^{th}$ point of $p^{-1}(x)$, and $P_x(1)+\cdots+P_x(r_x) = d$.  By the Riemann--Hurwitz formula,
\[ \chi(S) = d \chi(T^2) - \left( (d-r_\mathbf{0}) + (d-r_z) + (d-r_w) \right) = r_\mathbf{0} + r_z + r_w - 3d.\]

Husemoller \cite{Husemoller} proved a converse to the Riemann--Hurwitz formula in this setting (see also \cite{EdmondsKulkarniStong}), which says that, given $d \geq 2$ and three partitions $\mathcal P = \{P_\mathbf{0},P_z,P_w\}$ of $d$ such that $r_{\mathbf{0}} + r_z +r_w - 3d$ is a negative even integer $n$, there is a degree $d$ branched cover $p \colon S \to T^2$ with $\mathcal P(p) = \mathcal P$ and $\chi(S) = n$.

To prove the proposition, we need to show that for every even negative integer $n \leq -6$ there is a $d \geq 2$ and branching data $\mathcal P$ satisfying the above conditions {\em and} so that $P_x(j) \geq 2$ for all $x$ in $X$ and $j$ in $\{1,\ldots,r_x\}$.  
This can be done explicitly.  
For $n = -6$, let $d = 3$ and $P_x = (3)$ for all $x$ in $X$.
For $n = -8$, let $d = 4$ and $P_\mathbf{0} = P_z = (4)$ and $P_w = (2,2)$.  For all even $n \leq -10$, there is a positive integer $d \geq 4$ so that $n + 3d$ is in $\{3,4,5\}$.
If $n+3d = 3$, then we can set $P_x = (d)$ for all $x$ in $X$.  If $n+3d = 4$, we set $P_\mathbf{0} = P_z = (d)$ and $P_w = \left( \lfloor \tfrac{d}2\rfloor ,d - \lfloor \tfrac{d}2 \rfloor \right)$.  Finally, if $n+3d=4$, we let $P_\mathbf{0} = (d)$ and $P_z = P_w = \left( \lfloor \tfrac{d}2\rfloor ,d - \lfloor \tfrac{d}2 \rfloor \right)$.
\end{proof}

\subsection{Proof of Theorem \ref{theorem.ppAsurfaces}}

We now give the proof of the main theorem.

\medskip

\noindent
{\bf Theorem~\ref{theorem.ppAsurfaces}} {\em \TheoremInfiniteSurface}

\smallskip

\begin{proof}[Proof of Theorem \ref{theorem.ppAsurfaces}]
For any closed surface $S$ of genus at least $4$, Propositions~\ref{P:all at least 4} and \ref{P:branched cover trick} produce an embedding of finite index subgroups $\Mod_0(T^2_X) < \Mod(T^2_X)$ into $\Mod(S)$, and we identify $\Mod_0(T^2_X)$ with its image inside $\Mod(T^2_X)$. Together with Corollary~\ref{corollary.surfaces}, this produces infinitely many surface subgroups in $\Mod(S)$, and we let $\Omega$ be the set of these subgroups.  We must show that $\Omega$ contains infinitely many $\Mod(S)$--commensurability classes.

Proposition~\ref{P:branched cover trick} produces a totally geodesic equivariant embedding of Teichm\"uller spaces, and we identify $\Teich(T^2_X)$ with its image in $\Teich(S)$.  Let $\Lambda$ be the stabilizer of $\Teich(T^2_X)$, which contains $\Mod_0(T^2_X)$ with finite index.  Consequently, every $\Lambda$--commensurability class of subgroups in $\Omega$ is a finite union of $\Mod_0(T^2_X)$--commensurability classes.
As every $\Mod(S)$--commensurability class in $\Omega$ is a union of $\Lambda$--commensurability classes, it suffices to prove that this union is finite.  

Fix any $G$ in $\Omega$.  It suffices to prove that if $\{G_k\}_{k=0}^\infty \subset \Omega$ is any infinite collection of subgroups that are all $\Mod(S)$--commensurable with $G = G_0$, then there is a pair of distinct subgroups $G_i$ and $G_j$ that are $\Lambda$--commensurable.  For all $k$, let $h_k$ in $\Mod(S)$ be an element that conjugates a finite index subgroup of $G_k$ to a finite index subgroup of $G$.  

Since every element of $G_k$ is pseudo-Anosov, every element of $G_k$ has a Teichm\"uller geodesic axis contained in $\Teich(T^2_X)$. 
Furthermore, for any finite index subgroup $H_k$ of $G_k$, this same axis is the axis for some pseudo-Anosov in $H_k$. 
Applying $h_k$ to these axes produces axes of elements of $G$, which also lie in $\Teich(T^2_X)$.  
It follows that, for all $k$, the subspace
\[ 
	\Theta_k = h_k(\Teich(T^2_X)) \cap \Teich(T^2_X) 
\]
is an infinite diameter totally geodesic subspace that is invariant under $G$.  
If, for some $k > 0$, we have $h_k(\Teich(T^2_X)) = \Teich(T^2_X)$, then $h_k$ lies in $\Lambda$, which implies that $G_0$ and $G_k$ are commensurable, and we are done.  We therefore assume as we may that all of the $\Theta_k$ are proper totally geodesic subspaces.

Now let $k > 0$.
Since $\Teich(T^2_X)$ is $3$--dimensional over $\CC$, each $\Theta_k$ has dimension $1$ or $2$.  
If a $\Theta_k$ had dimension $1$  it would be a Teichm\"uller disk, and the stabilizer of such a disk can never contain a closed surface subgroup.
We conclude that each $\Theta_k$ has dimension $2$.  
Now, the stabilizers of $\Theta_k$ and $\Theta_\ell$ have a closed surface subgroup in common, and so the stabilizer of $\Theta_k \cap \Theta_\ell$ contains a closed surface subgroup as well. 
Since $\Theta_k$ and $\Theta_\ell$ are totally geodesic, so is $\Theta_k \cap \Theta_\ell$.
If this intersection had dimension one, it would be a Teichm\"uller disk, which is again forbidden by the presence of a surface group in its stabilizer. 
We conclude that the intersection $\Theta_k \cap \Theta_\ell$ has dimension two, and hence that $\Theta_k = \Theta_\ell$.
We may thus rename the $\Theta_k$ as one set $\Theta$, and we do so.

Projecting to the moduli space $\mathcal M(S)$ of $S$, the quotient $\Teich(T^2_X)/\Lambda$ properly immerses as a totally geodesic algebraic subvariety (see e.g.~\cite{Wright2020}).  
Passing to a manifold cover of $\mathcal M(S)$, we see that, at any point of self-intersection of $\Teich(T^2_X)/\Lambda$ in the image of $\Theta$, the self-intersection locus is locally given by finitely many intersecting $3$--dimensional totally geodesic subspaces.  
Therefore $\{h_k(\Teich(T^2_X))\}_{k=1}^\infty$ consists of only finitely many distinct subspaces.  
In particular, there are distinct $k, \ell \geq 1$ so that $h_k(\Teich(T^2_X)) = h_\ell(\Teich(T^2_X))$, and consequently $h_k^{-1}h_\ell(\Teich(T^2_X) ) = \Teich(T^2_X)$.  
But then $h_k^{-1}h_\ell$ lies in $\Lambda$, and  $h_k^{-1}h_\ell$ commensurates $G_k$ to $G_\ell$.  That is, $G_k$ and $G_\ell$ are in the same $\Lambda$--commensurability class, as required.
\end{proof}

\begin{remark} With a little more care, it is not difficult to see that the number of $\Mod_0(T^2_X)$ commensurability classes in any $\Mod(S)$--commensurability class is bounded in a manner depending only on the branched covering $S \to T^2_X$ and the choice of lifting subgroup $\Mod_0(T^2_X)$.  For example, rather than assuming $\{G_k\}$ was an infinite set in the proof, we could have just assumed it contained more than the number of intersecting $3$--dimensional subspaces near the self-intersection point.
\end{remark}

\subsection{Sketch of the proof of Theorem \ref{theorem.counting}}

Closer examination of the proofs of Corollary \ref{corollary.surfaces} and Theorem \ref{theorem.ppAsurfaces} establishes Theorem \ref{theorem.counting}.

\medskip

\noindent
{\bf Theorem~\ref{theorem.counting}}
{\em \TheoremCounting}

\smallskip

\begin{proof}[Sketch of proof.]
Let $\calS_\Gamma$ be the set of conjugacy classes of convex cocompact surface subgroups of $\Gamma$ and, for any surface $\mathfrak S$, let $\calS_{\Mod(\mathfrak S)}$ be the set of conjugacy classes of purely pseudo-Anosov subgroups of $\Mod(\mathfrak S)$.

As in the proof of Corollary \ref{corollary.surfaces}, 
let $\mathcal G = \rho_{\{z,w\}}^{-1}(\langle f^2_\mathbf{0} \rangle)$ and 
let $\mathcal N = \rho_{\{z,w\}}^{-1}(N)$, where $N$ is the normalizer of $\langle f^2_\mathbf{0} \rangle$  in $\Mod(T^2_\mathbf{0})$.
The indices   ${[\Mod(T_X^2) : \PMod(T_X^2)]}$ and $[\mathcal N : \mathcal G]$ are bounded by a universal constant, and tracing through the proof of Corollary \ref{corollary.surfaces} with this in mind reveals that there is a universal constant $m$ such that the the natural map $\calS_\Gamma \to \calS_{\Mod(T^2_X)}$ induced by the representation $\Delta_f$ is $m$-to-$1$. We conclude that the number of commensurability classes of purely pseudo-Anosov surface groups in $\Mod(T^2_X)$ of genus at most $h$ is comparable to the number of commensurability classes of such convex cocompact surface groups in $\pi_1(M_8)$.

Now let $S$ be the branched cover constructed in the proof of Theorem \ref{theorem.ppAsurfaces}, let $\Xi$ be a maximal finite index subgroup of $\Gamma$ such that $\Delta_f(\Xi)$ lifts to a subgroup $\Xi_S$ of $\Mod(S)$, and note that the index of $\Xi$ in $\Gamma$ is bounded by a constant depending only on $\chi(S)$.
As noted after the proof of Theorem \ref{theorem.ppAsurfaces}, the number of $\Xi_S$--commensurability classes of purely pseudo-Anosov surface groups in $\Xi_S$ of bounded genus is comparable to the number of such $\Mod(S)$--commensurability classes, where the constants of comparison depend only on $\chi(S)$.

It follows that the number of $\Mod(S)$--commensurability classes of purely 
\linebreak 
pseudo-Anosov surface groups of genus at most $h$ is comparable to the number of $\pi_1(M_8)$--commensurability classes of convex cocompact surface groups of genus at most $h$.
The proof is completed by noting that the number of commensurability classes of cocompact \textit{fuchsian} subgroups of $\pi_1(M_8)$ of genus at most $h$ is comparable to a linear function of $h$, by the discussion at the end of \cite{Vulakh}.\footnote{We thank M. Stover for bringing \cite{Vulakh} to our attention.}
\end{proof}

\bibliographystyle{plain}
\bibliography{AtoroidalBIB}

{
\footnotesize

\noindent 
\textsc{Department of Mathematics, University of Wisconsin, Madison, WI}
\newline \noindent  \texttt{kent@math.wisc.edu}   

\bigskip
\noindent
\textsc{Department of Mathematics, Rice University, Houston, TX}
\newline \noindent \texttt{cjl12@rice.edu}

\bigskip
\noindent
\textsc{Autumn E. Kent and Christopher J. Leininger}

}

\end{document}